\newcommand*{\p}{\mathbb{P}}
\newcites{SM}{References}
\newcommand{\floor}[1]{\lfloor #1 \rfloor}
\newcommand{\norm}[1]{\left\lVert#1\right\rVert}
\newtheorem{Lemma}{Lemma}[section]
\newtheorem{theorem}[Lemma]{Theorem}
\newtheorem{remark}[Lemma]{Remark}
\theoremstyle{definition}
\date{}
\definecolor{darkblue}{rgb}{.1, 0.1,.8}
\definecolor{darkgreen}{rgb}{0,0.8,0.2}
\definecolor{darkred}{rgb}{.8, .1,.1}
\newcommand*{\E}{\mathbb{E}}
\newcommand*{\N}{\mathbb{N}}
\newcommand*{\R}{\mathbb{R}}
\renewcommand{\P }{{\mathbb P}}
\newcommand{\1}{\mathbbm{1}}
\title{Detecting relevant deviations from the white noise assumption for non-stationary time series}
\author{
  {\normalsize Patrick Bastian} \\
{\normalsize  Ruhr-Universit\"at Bochum} \\
{\normalsize  Fakult\"at f\"ur Mathematik} \\
{\normalsize  44780 Bochum, Germany}
}
\begin{document}
\maketitle

\begin{abstract}
    
We consider the problem of detecting  deviations from a white noise assumption in  time series.  Our approach differs from the numerous methods proposed for this purpose with respect to two aspects. First, we allow for non-stationary time series. Second, we address the problem that a white noise test, for example checking the residuals of a model fit,  is  usually not performed because one believes in this hypothesis, but thinks that the white noise hypothesis may be approximately true, because a postulated models describes the unknown relation well.  This reflects a meanwhile classical paradigm of  \cite{Box76} that  ”all models are wrong but some are useful”.   We address this point of view by  investigating if the maximum deviation of the local autocovariance functions from 0 exceeds  a given threshold $\Delta$ that can either be specified by the user or chosen in a data dependent way.

The formulation of the  problem in this form raises several mathematical challenges, which do not appear when one is testing the classical white noise hypothesis. We use high dimensional Gaussian approximations for dependent data to furnish a bootstrap test, prove its validity and showcase its performance
on both synthetic and real data, in particular we inspect log returns of stock prices and show that our approach reflects some observations of \cite{Fama1970}
regarding the efficient market hypothesis.
\end{abstract}
\medskip
  \noindent
  Keywords:   White Noise, Autocorrelation, Relevant Hypotheses, Bootstrap, Time Series

\noindent AMS Subject classification: 62M10, 62F40

\section{Introduction}
\def\theequation{1.\arabic{equation}}	
  \setcounter{equation}{0}
In the last decades the analysis of time series has received an enormous amount of attention on account of its numerous applications including subjects such as economics, hydrology, climatology, engineering, genomics and linguistics to name but a few. One key characteristic of such data that plays both a huge role in theory as well as in applications is their temporal dependence structure, there are two main approaches to address these dynamics. One in the spectral domain, based on the spectral density and one in the time domain, based on the autocovariance function. Independently of the approach a key problem is the testing for a white noise assumption which is important when exploring the serial dependence structure of a time series and when diagnosing whether or not a time series model is a good fit for the data at hand.  We refer the interested reader to \cite{Li2003}, \cite{Francq2010} and \cite{Kim2023} for surveys on this topic for univariate, vector valued and functional time series, respectively. \\

In this paper we are chiefly concerned with a time domain approach to univariate time series, although the results we present can be extended to multivariate or functional data in a fairly straightforward manner. Our central endeavor is to extract information about the autocovariance function $\gamma:\N\rightarrow \R$ from a given sample $X_1,...,X_n$, in particular one is interested in ascertaining if the sequence $\{\gamma(h)\}_{h \in \N}$ is zero, i.e. if the serial correlation structure is trivial. This is, for instance, related to the estimation of the order of linear processes (see \cite{Han2014} and the references therein) and to the goodness of fit of a time series model (see chapter 8 in \cite{box2015}). There is a large amount of statistical procedures available for this testing problem, such as the Box Pierce portmanteau test (and modifications thereof, see \cite{Box1970} and \cite{Ljung78}) which employs a sum of squares test statistic. Another approach consists of analysing the maximal deviation of the empirical autocovariance from its population version, i.e. 
\begin{align}
\label{s1}
    \sqrt{n}\max_{1 \leq h \leq d_n}|\hat \gamma(h)-\gamma(h)|
\end{align}
for some $d_n$ that may tend to infinity. In the stationary case results in this direction can be found in \cite{Hannan1974}, \cite{Jirak2011}, \cite{Han2014} and \cite{Braumann2021} among others. \\

One common thread that unites all the hitherto cited references is that they assume stationarity of the time series, an assumption that is not always valid in practical applications but that significantly simplifies technical considerations. One way of extending the scope of autocovariance analysis that has gained a lot of traction in the literature is the notion of locally stationary time series. We refer the reader to \cite{Zhou2010} and \cite{Dahlhaus2013} and the references therein for additional literature. In this framework the autocovariance function $\gamma$ depends on two parameters $h \in \N$ and $t \in [0,1]$ and is defined by
\begin{align}
\label{p7}
    \gamma(h,t)=\lim_{k/n \rightarrow t} \E[X_{k+h}X_k] \quad h \in \N, t \in [0,1]
\end{align}
where existence is ensured by the definition of locally stationary time series. As mentioned earlier the principal statistical problem we are concerned with in this paper are white noise tests, i.e. testing the hypotheses
\begin{align}
\label{h1}
    H_0:\sup_{h \in \N}\sup_{t \in [0,1]}|\gamma(h,t)|=0 \quad \text{ vs } \quad H_1:\sup_{h \in \N}\sup_{t \in [0,1]}|\gamma(h,t)|>0
\end{align}
Note that for locally stationary time series one may have, for all $h$, $\gamma(h)=0$ for the classical autocovariance while $H_0$ is violated. To the best of our knowledge research on the local autocovariance function and hypotheses of the form \ref{h1} are, compared to their stationary counterparts, still in their nascent stages. We refer the interested reader to the three works \cite{Killick20}, \cite{Cui2021} and \cite{Ding2024}. The first two works define estimators for $\gamma(h,t)$ which are shown to be consistent under Gaussian and non Gaussian assumptions, respectively. \cite{Ding2024} defines a Box-Pierce type statistic, derives its asymptotic (data dependent) distribution, defines a bootstrap procedure and establishes its validity.  \\

We now come to the first key point of this paper, while the assumption of a white noise process is technically convenient it is rare to be satisfied exactly in applications. For instance, "Since all models are wrong" (\cite{Box76}), it seems overly optimistic to apply hypotheses of the form \eqref{h1} to residuals when diagnosing goodness of fit of a time series model. In many cases it might be more reasonable to expect the autocovariances to be negligibly small but not zero and one might still be interested in working under a white noise assumption (or diagnosing the model fit to be acceptable) as long as the deviation from that assumption is not too large. \\
An important example occurs when considering the efficient market hypothesis which implies independence of log returns in certain scenarios, \cite{Fama1970} notes that "Looking hard, though, one can probably find evidence of statistically "significant" linear dependence in Table 1 [...]. But with samples of the size underlying Table 1 [...] statistically "significant" deviations from zero covariance are not necessarily a basis for rejecting the efficient markets model" (see also \cite{Campbell97} for a discussion about testing the efficient market hypothesis that comes to similar conclusions - "Although many of the techniques covered in these pages are central to the market-efficiency debate [...] we feel that they can be more profitably applied to measuring efficiency rather than to testing it."). \\
In these cases a consistent test for \eqref{h1} will, with sufficiently high sample size, detect arbitrary small deviations from the null - rejecting them even when the deviation might be negligible for the application at hand. Motivated by this we propose to instead consider relevant hypotheses, i.e. hypotheses of the form
\begin{align}
\label{h2}
    H_0(\Delta):\max_{h \in \N, t}|\gamma(h,t)|\leq \Delta \quad \text{ vs } \quad H_1(\Delta):\max_{h \in \N, t}|\gamma(h,t)|>\Delta
\end{align}
where $\Delta>0$ that signifies an acceptable amount of deviation from the white noise assumption that can be pre-specified by the user. Of course one may also choose $\Delta$ in an automatic fashion by the sequential rejection principle (see remark \ref{pr3} for a data adaptive procedure that chooses $\Delta$), suggesting the use of $\Delta$ as a measure of evidence against a white noise assumption. Similar perspectives have been taken in \cite{Bücher2023} and \cite{Delft2024} and we refer to the discussions therein for additional details. In section \ref{s3} we showcase the utility of this approach by inspecting the daily log returns of the S\&P500, to be precise we show that $H_0(\Delta)$ is rejected only for very small $\Delta$ indicating that the efficient market hypothesis might be reasonable at least for this particular data set. This stands in contrast to the results of the usual methods like the Box-Pierce test which reject $H_0(0)$ with very small p values. \\

Forgoing the hypotheses \eqref{h1} in favor of testing \eqref{h2} brings with it substantial technical complications. Deriving the asymptotic distribution of an appropriate test statistic under \eqref{h2} is more complicated than under \eqref{h1} as the data is neither stationary nor uncorrelated. In fact it is not clear if a limit distribution exists at all. \\

\textbf{Our Contributions:} 
We extend the relevant hypothesis approach for white noise testing proposed in \cite{Bücher2023} and \cite{Delft2024} to the supremum distance, i.e. we propose a supremum type statistic for hypotheses of the form \eqref{h2}. To be precise we establish a testing procedure that additionally allows for non-stationarity under the null and a diverging number of lags. To the best of our knowledge there are as of yet no results on supremum type statistics in the non-stationary setting and no results on supremum type statistics in a relevant hypothesis framework even for stationary data. This work therefore can be understood as complementary to \cite{Ding2024} in two senses:
\begin{enumerate}
    \item We propose and analyse a supremum type statistic instead of a sum of squares type statistic for the autocovariance of locally stationary data.
    \item We investigate relevant instead of "classical" hypotheses.
\end{enumerate}

From a technical perspective we establish the validity of a block bootstrap procedure for high dimensional $\beta$-mixing time series that is similar to the one provided in \cite{Chernozhukov2018} but only requires the choice of one instead of two block size parameters. Additionally our conditions on the coordinate-wise variances of the data that is to be bootstrapped are substantially less strict.

\textbf{Additional Notations}
For two real valued sequences $(a_n)_{n \in \N}, (b_n)_{n \in \N}$ we use the notation  $ a_n \lesssim b_n$ if $a_n \leq Cb_n$ for some constant $C$ that does not depend on $n$ and that may change from line to line. In addition we write $a_n \simeq b_n$ if $a_n \lesssim b_n$ and $b_n \lesssim a_n$. 
We say that an event $\mathcal{A}$ holds with high probability if $\p(\mathcal{A})=1-o(1)$.

\section{Univariate Time Series}
\def\theequation{2.\arabic{equation}}	
  \setcounter{equation}{0}
In this section we consider a real valued mean zero time series $X_1,...,X_n$ and want to test if is close to a white noise sequence, i.e we are interested in detecting relevant deviations of their local autocovariances from zero. To that end we assume that 
\begin{align}
\label{a2}
    \E[X_{j+h}X_j]=\gamma(h,j/n)+O(d_n/n)
\end{align}
for some function $\gamma(h,\cdot)$ and sequence $d_n \rightarrow \infty$. We define
\begin{align}
\label{a3}
    d_\infty=\sup_{h \in \N, t \in [0,1]}|\gamma(h,t)|
\end{align}
as a measure of deviation of our time series from a white noise process.
We thus consider the hypotheses 
\begin{align}
\label{h11}
     H_0(\Delta):d_\infty \leq \Delta \quad \text{ vs } \quad H_1(\Delta):d_\infty>\Delta
\end{align}
where $\Delta>0$ is a given constant.

\begin{remark}
    \begin{enumerate}
        \item [1)] The assumption that  $X_1,...,X_n$ have mean zero is purely for convenience's sake as including estimation of the means does not change the proofs beyond some cumbersome but straightforward calculations. One may use local (linear) regression to obtain an estimate for the possible time-varying mean function and the results of this paper will apply without changes.
        \item [2)] A large class of time series fulfilling these assumptions is given by piecewise locally stationary time series as given in \cite{Zhou2013}.             
        \item [3)] One may instead of course consider the local autocorrelation function
        \begin{align}
                \rho(h,t)=\gamma(h,t)/\gamma(0,t)
        \end{align}
        and the associated quantities analogous to \eqref{a3} and \eqref{h11}. The methods presented in this paper can be extended to this setting by the obvious modifications of the test statistics and the bootstrap procedure we present. 
    \end{enumerate}
\end{remark}

To construct a test statistic we will require an estimator for $\gamma(h,t)$. For this purpose  let $K$ denote a kernel function and define $K_h(\cdot) =K(\cdot / h)$ for $h>0$. For fixed $t \in [0,1]$ and $h \in \N$ the local linear estimator $\hat \gamma_{h}^{h_n}$ of the function $\gamma_h$ at  the point $t$ with positive bandwidth $h_n=o(1)$ as $n \rightarrow \infty$ is defined by the first coordinate of the
minimizer
\begin{align}
    \label{p401}
    (\tilde \gamma_h^{h_n}(t), (\tilde \gamma_h^{h_n})^\prime(t))=\underset{c_0,c_1 \in \R}{\arg \min}\sum_{j=1}^{n-h}( Y_{jh}-c_0-c_1(j/n-t))^2K_{h_n}(j/n-t).
\end{align}
In the following we will use a bias corrected version (see \cite{Schucany1977}) of the local linear estimator given by \begin{align}
    \label{p17}
    \hat \gamma_h^{h_n}=2\tilde \gamma_h^{h_n/\sqrt{2}}-\tilde \gamma_h^{h_n}~.
\end{align}
When the choice of $h_n$ is clear from the context we will suppress the dependence of $\hat \gamma_h=\hat \gamma_h^{h_n}$ on $h_n$. Theorem  \ref{t3} in the appendix describes the theoretical properties of the resulting estimate.\\

With these estimators at our disposal we can now define, for some sequence $d_n \rightarrow \infty$ the test statistic
\begin{align}
\label{p3b}
    \hat T_{n,\Delta}=\sqrt{nh_n}\max_{\substack{1 \leq h \leq d_n \\ t \in I_{n,h}}}(|\hat \gamma(h,t)|-\Delta):=\sqrt{nh_n}(\hat d_{\infty,n}-\Delta)
\end{align}
where 
\begin{align}
    I_{n,h}=[h_n,1-h_n] \cap [0,1-h/n] ~.
\end{align}
The restriction to the sets $I_{n,h}$ serves two purposes - avoidance of boundary effects in local linear regression and ensuring well definedness of the random variables $Y_j(h)$. \\

Clearly we want to reject $H_0(\Delta)$ for large values of $\hat T_{n,\Delta}$. Obtaining critical values for this statistic is not a trivial matter even in the stationary case. To wit neither the existence of a limit distribution is evident nor, in the event of its existence, is it clear what the appropriate scaling is. We also observe that the asymptotic distribution of statistics of the form \eqref{p3} usually depends on certain extremal sets which are data dependent, further complicating the issue.\\ 

We will propose a solution to these issues by means of a bootstrap procedure which we define in section \ref{s2.4} and whose assumptions are stated in the next Section.  \\

\subsection{Assumptions}
\label{assumptions}
We now proceed with the necessary definitions and assumptions that facilitate the theoretical analysis that will follow.

For modeling the dependence structure of the time series $X_1,...,X_n$ we will use the concept of $\beta$-mixing as introduced in \cite{Bradley2005}. To be precise, assume that all random variables are defined on a probability space $(\Omega, \mathcal{A}, \mathbb{P})$, consider two sigma fields $\mathcal{F}$, $\mathcal{G} \subset \mathcal{A}$ and define
\begin{align}   
    \beta(\mathcal{F},\mathcal{G})=\sup\frac{1}{2}\sum_{i=1}^I\sum_{j=1}^J|\p(F_i\cap G_j)-\p(F_i)\p(G_j)|
\end{align}
where the supremum is taken over all partitions $\{F_1,...,F_I\}$ and $\{G_1,...,G_J\}$ of $\Omega$ such that $F_i \in \mathcal{F}$ and $G_i \in \mathcal{G}$. Next we denote by   $\mathcal{F}_k^{k^\prime}$  the sigma field generated by the process $\{X_j\}_{k \leq j \leq k^\prime}$ and define the $\beta$ mixing coefficients of the sequence $\{X_j\}_{j \in \N}$ by
\begin{align}   
    \beta(k)=\sup_{l \in \N} \beta(\mathcal{F}_1^l,\mathcal{F}_{l+k}^\infty)
\end{align}
The sequence $\{X_j\}_{j \in \N}$ is called $\beta$-mixing if  $\beta(k)\rightarrow 0$ as $k \rightarrow \infty$.

\begin{enumerate}
    \item  [{\bf(M)}]  The time series $\{X_j\}_{j \in \N}$ has bounded moments of order $2J+\delta$ for some $J\geq 8$ and $\delta>0$, i.e.
    $$
    \sup_{j \in \N}\E[X_j^{2J+\delta}]\leq K<\infty.
    $$    
    \item  [{\bf(D)}]
          $\{X_j\}_{j \in \N}$  is $\beta$-mixing, with mixing coefficients $\beta(j)$ satisfying
        \begin{align*}
            \sum_{j=1}^\infty (j+1)^{J-1}\beta(j)^{\frac{\delta}{2J+\delta}}<\infty~, 
        \end{align*}
        for $\delta, J$ from (A1).    
    
    \item [{\bf(K1)}] We have a kernel $K$ that is a symmetric and twice differentiable function, supported on the interval $[-1,1]$, that satisfies $\int_{-1}^{1} K(x)dx=1$ as well as $K(0)>0$. Further assume that the bandwidth $h_n$ satisfies $h_n\sim n^{-\gamma_1}$ where $1/7<\gamma_1 \leq 1/5$
    
    \item [{\bf(K2)}] The partial derivatives $\frac{\partial^2}{\partial t^2}\gamma_h(t)$ of the functions $\gamma_h:[0,1]\rightarrow \R$ exist on $[0,1]$ and are uniformly bounded and Lipschitz continuous, i.e.
    \begin{align}
        \Big |\frac{\partial^2}{\partial t^2}\gamma_h(t)-\frac{\partial^2}{\partial t^2}\gamma_h(t')\Big | &\leq C_1|t-t'|~\\
        |\gamma_h(t)|&\leq C_2
    \end{align}    
    where the constants  $C_1,C_2$ do not depend on $h$. In particular we have that the $\gamma_h$ are Lipschitz with Lipschitz constant $H$ that does not depend on $h$ or $n$.  
   
\end{enumerate}

Assumptions $(M)$ and $(D)$ are standard moment and mixing assumptions, $J\geq 8$ is mostly required to make statements easier to read, it can be relaxed at the cost constraints on other parameters which are difficult to state in an organized manner due to mutual influences on their constraints. Assumptions $(K1)$ and $(K2)$ are standard assumptions for local linear regression (compare to \cite{buecher21} for instance).

\subsection{A bootstrap based on a stochastic expansion for $ \hat T_{n,\Delta}$}
\label{s2.4}
To obtain a bootstrap procedure that produces asymptotically valid quantiles for the statistic $\hat T_{n,\Delta}$ defined in \eqref{p3b} we require a suitable stochastic expansion of $\hat d_{\infty,n}$. We will need a number of additional definitions, beginning with a population version of $\hat d_{\infty,n}$ given by
\begin{align}
    d_{\infty,n}=\max_{\substack{1 \leq h \leq d_n \\ t \in I_{n,h}}}|\gamma(h,t)|~,
\end{align}
as well as the (discretized) extremal sets associated to $d_{\infty,n}$ and $\hat d_{\infty,n}$ which are defined as
\begin{equation}
\label{p201}
\begin{split}
    \hat{\mathcal{E}}^\pm_{\rho}&=\big\{(h,t)\big|  \pm \hat \gamma(h,t) \pm \rho \geq \hat d_{\infty,n}, 1 \leq h \leq d_n, t \in \{1/n,...,1\}\cap I_{n,h}\big\}   \\
\mathcal{E}^\pm_{\rho}&=\big\{(h,t) \big|  \pm  \gamma(h,t) \pm \rho \geq  d_{\infty,n},  1 \leq h \leq d_n, t \in \{1/n,...,1\}\cap I_{n,h}\big\}   ~,.
\end{split}
\end{equation}
Repeated use also makes the following abbreviations useful
 \begin{align}
    &\mathcal{E}:=\mathcal{E}^+_0\cup \mathcal{E}^-_0 \\
    &\hat{\mathcal{E}}:= \hat{\mathcal{E}}^+_0\cup \hat{\mathcal{E}}^-_0\\
     &s(h,t)=\text{sign}(\gamma(h,t))\\
     &\hat s_n(h,t)=\text{sign}(\hat \gamma(h,t))~.
\end{align}

The desired stochastic expansion of $\hat T_{n,\Delta}$ is then given as follows.

\begin{theorem}
    \label{t4}
    Assume that (M), (D), (K1), (K2) are satisfied. Denote by $\rho_n$  any sequence such that $\rho_n^{-1}=o\big ((nh_n)^{1/2}d_n^{-1/J}h_n^{\frac{J+1}{J^2}}\big )$. Further assume that $d_\infty>0$ and that $d_n \lesssim n^{-\gamma}\sqrt{nh_n^{-1}}$. We then have with probability converging to $1$ that
   \begin{align}
   \label{p33}
       \hat d_{\infty,n}-d_{\infty,n} &\leq  \sup_{(h,t) \in \mathcal{E}_{\rho_n}}\Big(\frac{s(h,t)}{nh_n}\sum_{j=1}^{n-h}\epsilon_{jh}K^*_{h_n}(j/n-t)\Big)+ o_\p(n^{-\gamma}(nh_n)^{-1/2}) \\
       & \leq \hat d_{\infty,n}-d_{\infty,n}+O(\rho_n)~, \nonumber 
   \end{align}   
   where
   \begin{align}
       \epsilon_{jh}=X_{j+h}X_j- \gamma_h(j/n)
   \end{align}
\end{theorem}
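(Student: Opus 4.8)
\emph{Overview.} The plan is to reduce \eqref{p33} to two ingredients: a uniform stochastic expansion of the bias-corrected local linear estimator, and a deterministic bookkeeping argument on the enlarged extremal sets $\mathcal E_{\rho_n}:=\mathcal E^+_{\rho_n}\cup\mathcal E^-_{\rho_n}$.

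\emph{Step 1: uniform linearisation.} Writing $K^*_{h_n}$ for the equivalent kernel of the estimator \eqref{p17}, I would first use Theorem~\ref{t3}, upgraded to a uniform statement by a maximal inequality for $\beta$-mixing triangular arrays under (M) and (D), to obtain
\[
\hat\gamma(h,t)=\gamma(h,t)+\underbrace{\frac{1}{nh_n}\sum_{j=1}^{n-h}\epsilon_{jh}K^*_{h_n}(j/n-t)}_{=:Z_n(h,t)}+R_n(h,t),\qquad \sup_{\substack{1\le h\le d_n\\ t\in I_{n,h}}}|R_n(h,t)|=o_\p\big(n^{-\gamma}(nh_n)^{-1/2}\big).
\]
Here (K2) provides a Lipschitz second derivative, the Schucany correction in \eqref{p17} cancels the leading $O(h_n^2)$ bias, and the lower bound $\gamma_1>1/7$ in (K1) is precisely what makes the residual $O(h_n^3)$ bias of order $o(n^{-\gamma}(nh_n)^{-1/2})$; the gap between the local linear weights and $(nh_n)^{-1}K^*_{h_n}$ is $O((nh_n)^{-1})$ on the interior set $I_{n,h}\subseteq[h_n,1-h_n]$, and the condition $d_n\lesssim n^{-\gamma}\sqrt{nh_n^{-1}}$ keeps the maximum of the stochastic remainders over the $d_n$ lags negligible. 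The same maximal inequality, using the moment order $2J$ and the mixing rate (D), yields $\sup_{h,t}|Z_n(h,t)|=o_\p(\rho_n)$ exactly under the calibration $\rho_n^{-1}=o\big((nh_n)^{1/2}d_n^{-1/J}h_n^{(J+1)/J^2}\big)$, whence also $\sup_{h,t}|\hat\gamma(h,t)-\gamma(h,t)|=o_\p(\rho_n)$.

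\emph{Step 2: locating the extremal sets.} Since $d_\infty>0$ and the $\gamma_h$ are Lipschitz in $t$ with a common constant, $d_{\infty,n}\to d_\infty>0$, so with high probability $d_{\infty,n}>\rho_n$; hence $s\equiv+1$ on $\mathcal E^+_{\rho_n}$, $s\equiv-1$ on $\mathcal E^-_{\rho_n}$, and analogously $\hat s_n\equiv\pm1$ on $\hat{\mathcal E}^\pm$. From $|\hat d_{\infty,n}-d_{\infty,n}|\le\sup_{h,t}|\hat\gamma(h,t)-\gamma(h,t)|=o_\p(\rho_n)$ and $\hat\gamma\equiv\hat d_{\infty,n}$ on $\hat{\mathcal E}^+$, every $(h,t)\in\hat{\mathcal E}^+$ satisfies $\gamma(h,t)\ge\hat d_{\infty,n}-o_\p(\rho_n)\ge d_{\infty,n}-\rho_n$ with high probability, i.e.\ $(h,t)\in\mathcal E^+_{\rho_n}$; likewise for the $-$ component. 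Thus on a $(1-o(1))$-event $\hat{\mathcal E}\subseteq\mathcal E_{\rho_n}$ and $\hat s_n=s$ on $\hat{\mathcal E}$, and I would carry out everything below on the intersection of this event with the one from Step~1.

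\emph{Step 3: the two bounds.} For the first inequality, choose $(\hat h,\hat t)$ with $\hat d_{\infty,n}=|\hat\gamma(\hat h,\hat t)|=\hat s_n(\hat h,\hat t)\hat\gamma(\hat h,\hat t)$, so $(\hat h,\hat t)\in\hat{\mathcal E}\subseteq\mathcal E_{\rho_n}$. Then $|\gamma(\hat h,\hat t)|\ge\hat s_n(\hat h,\hat t)\gamma(\hat h,\hat t)$ and $d_{\infty,n}\ge|\gamma(\hat h,\hat t)|$ give
\[
\hat d_{\infty,n}-d_{\infty,n}\le\hat s_n(\hat h,\hat t)\big(\hat\gamma(\hat h,\hat t)-\gamma(\hat h,\hat t)\big)=s(\hat h,\hat t)Z_n(\hat h,\hat t)+o_\p\big(n^{-\gamma}(nh_n)^{-1/2}\big)\le\sup_{(h,t)\in\mathcal E_{\rho_n}}s(h,t)Z_n(h,t)+o_\p\big(n^{-\gamma}(nh_n)^{-1/2}\big).
\]
For the second, let $(h^*,t^*)\in\mathcal E_{\rho_n}$ attain $\sup_{\mathcal E_{\rho_n}}sZ_n$ and assume WLOG $(h^*,t^*)\in\mathcal E^+_{\rho_n}$, so $s(h^*,t^*)=+1$ and $\gamma(h^*,t^*)\ge d_{\infty,n}-\rho_n$; using $\hat\gamma(h^*,t^*)\le|\hat\gamma(h^*,t^*)|\le\hat d_{\infty,n}$ and that $\rho_n$ dominates $(nh_n)^{-1/2}$ (hence the remainder),
\[
s(h^*,t^*)Z_n(h^*,t^*)=\hat\gamma(h^*,t^*)-\gamma(h^*,t^*)+o_\p\big(n^{-\gamma}(nh_n)^{-1/2}\big)\le\hat d_{\infty,n}-(d_{\infty,n}-\rho_n)+o_\p(\rho_n)\le\hat d_{\infty,n}-d_{\infty,n}+O(\rho_n),
\]
and combining the two displays yields \eqref{p33}.

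\emph{Main obstacle.} I expect Step~1 to be the hard part: establishing the linearisation \emph{simultaneously} over $d_n\to\infty$ lags and the $\Theta(n)$ time-grid points, with the remainder at the sharp order $o_\p(n^{-\gamma}(nh_n)^{-1/2})$ and $\sup|Z_n|=o_\p(\rho_n)$. This needs a Rosenthal/Yokoyama-type moment bound for kernel-smoothed sums of the products $X_{j+h}X_j$ of a $\beta$-mixing array — which is what (M) with $J\ge8$, the summability in (D), and the exponent $(J+1)/J^2$ in the definition of $\rho_n$ are tailored to — together with uniform control of the local linear design matrix on the sets $I_{n,h}$. Once Step~1 is in hand, Steps~2 and~3 are essentially deterministic.
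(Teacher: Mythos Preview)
Your approach is essentially identical to the paper's: there the proof is also split into the uniform linearisation (Theorem~\ref{t3}, your Step~1) and the extremal-set bookkeeping (Lemma~\ref{pl7}, your Steps~2--3), the latter resting on the uniform bound $\sup_{h,t}|\mathcal H_n(h,t)|=O_\p\big(d_n^{1/J}h_n^{-(J+1)/J^2}\big)$ (Lemma~\ref{pl5}, i.e.\ your $\sup|Z_n|=o_\p(\rho_n)$) and the inclusion $\hat{\mathcal E}\subseteq\mathcal E_{\rho_n}$ (Lemma~\ref{pl6}). The one point you skate over is that $\hat d_{\infty,n}$ is a supremum over continuous $t\in I_{n,h}$ while the sets $\hat{\mathcal E},\mathcal E_{\rho_n}$ in \eqref{p201} are restricted to the grid $\{1/n,\dots,1\}$, so the argmax $(\hat h,\hat t)$ in your Step~3 need not lie in $\hat{\mathcal E}$; the paper closes this small gap via a modulus-of-continuity estimate for $\mathcal H_n(h,\cdot)$ (Lemma~\ref{pl4}, applied with $\tau=1/(nh_n)$), which is exactly the chaining ingredient underlying your maximal inequality.
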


We now define a block multiplier bootstrap; let $\nu_j$ be a sequence of iid standard normal variables and define for some $l_n \rightarrow \infty$ the multiplier blocks
\begin{align}
    Y_{jh}(t)=l_n^{-1/2}\nu_j\sum_{k=0}^{l_n-1}\hat \epsilon_{(j+k)h}K^*_{h_n}(j/n-t)~.
\end{align}
where $\hat \epsilon_{jh}=X_{j+h}X_j-\hat \gamma_h(j/n)$\\
Now the bootstrap version of the partial sum \eqref{p33} is given as
\begin{align}
\label{bootstat}
    T_n^*:=\sup_{(h,t) \in \hat{\mathcal{E}}_{\rho_n}}(nh_n)^{-1}\hat s_n(h,t)\sum_{j=1}^{n-(l_n \lor h)+1}Y_{jh}(t)~.
\end{align}
for some sequence $\rho_n$ that satisfies $\rho_n^{-1}=o\big ((nh_n)^{1/2}d_n^{-1/J}h_n^{\frac{J+1}{J^2}}\big )$.

We also require one more additional assumption for the bootstrap to work. 
\begin{itemize}
    \item [{\bf(V)}] Let $q=q_n$ be any sequence such that $q_n\rightarrow \infty, q=o(\sqrt{n})$. We assume that for all such sequences we have
    \begin{align}
    \label{pa1}
        \min_{I}\max_{(h,t) \in \mathcal{E}_{\rho_n}}\frac{1}{mh_nq}\sum_{J \in \{I_1,...,I_m\}}\text{Var}\Big(\sum_{ i \in J}X_iX_{i+h}K^*_{h_n}(i/n-t)\Big) \gtrsim 1
    \end{align}
        where $\max_{I}$ is taken over all collections consisting of $m=\lfloor n/q \rfloor$ disjoint subsets of $\{1,...,n\}$ of the form $\{k+1,...,k+q\}$ and $K^*(x)=2\sqrt{2}K(\sqrt{2}x)-K(x)$.
\end{itemize}
\begin{remark}
\begin{enumerate}
    \item [(1)]

    To gain some insight into the nature of this assumption let us assume that $K^*$ is a constant function supported on $[-1,1]$. Then, letting 
    \begin{align}
        L =  \big  \{ \lfloor nt_n  \rfloor - \lfloor \tfrac{ nh_n }{2}  \rfloor  +1, 
         \lfloor nt_n  \rfloor - \lfloor \tfrac{ nh_n}{2}  \rfloor +2, \ldots  , \lfloor nt_n  \rfloor +  \lfloor \tfrac{  nh_n}{2}  \rfloor +1
           \big\} 
    \end{align}
    denote the set of consecutive indices ``centered at $nt_n$  and of size $nh_n$'' we then obtain for any $l \in \{1,...,m\}$ 
    \begin{align}
        \text{Var}\Big(\sum_{ i \in I_l}X_iX_{i+h}K^*_{h_n}(i/n-t)\Big)&\simeq  \text{Var}\Big(\sum_{i \in I_l\cap L}X_iX_{i+h}\Big)
    \end{align}
    For all but two indices $l \in \{1,...,m\}$ we have that that $I_l \cap J$ is either empty or equal to $J_l$. In particular we have that the number of such sets that is nonempty is proportional to $mh_n$. Consequently we obtain that
    \begin{align}
        \frac{1}{mh_nq}\sum_{J \in \{I_1,...,I_m\}}\text{Var}\Big(\sum_{ i \in J}X_iX_{i+h}K^*_{h_n}(i/n-t)\Big) \simeq \frac{1}{mh_n}\sum_{l:I_l \cap J \neq \emptyset} \frac{1}{q}\text{Var}\Big(\sum_{i \in I_l\cap L}X_iX_{i+h}\Big)
    \end{align}
    In the case of a second order stationary sequence this reduces to 
    \begin{align}
        \frac{1}{q}\text{Var}\Big(\sum_{i=1}^qX_iX_{i+h}\Big)
    \end{align}
    which, in the case of strict stationarity, is asymptotically bounded from below whenever
    \begin{align}
        \text{Var}\Big(\sum_{i=1}^{n-h}X_iX_{i+h}\Big)\rightarrow \infty
    \end{align}
    by Lemma 1 from \cite{Bradley1997}.
\item [(2)] One can further weaken this assumption to only concern two particular sequences and two particular  collections of subsets - for details regarding this see the statement of Theorem \ref{pt2} (the two sequences are given by $q$ and $r$ therein, the associated collections of subsets are defined at the beginning of the associated Section).

    \end{enumerate}
\end{remark}

In the following we denote by $\hat q^*_{1-\alpha}$ the $(1-\alpha)$-quantile of the statistic \eqref{bootstat}. We again recall our test statistic
\begin{align}
\label{p101}
    \hat T_{n,\Delta}=\sqrt{nh_n}(\hat d_{\infty,n}-\Delta),
\end{align}
and propose to reject the null hypothesis in \eqref{h11} whenever 
\begin{align}
\label{p92}
    \hat T_{n,\Delta}\geq \hat q^*_{1-\alpha}~.
\end{align}
The main result of this section shows that this decision rule defines a consistent and asymptotic level $\alpha$ test for the hypotheses in \eqref{h11}.

\begin{theorem}
    \label{t5}
      Assume that (M), (D), (K1), (K2), (V) are satisfied. Denote by $\rho_n$  any sequence such that $\rho_n^{-1}=o\big ((nh_n)^{1/2}d_n^{-1/J}h_n^{\frac{J+1}{J^2}}\big )$.  Additionally,  assume that there exists a constant $\delta>0$ such that
    \begin{align}            
        n^{0.325} \lesssim l_n\lesssim n^{1/3} \\        
        d_n \lesssim n^{-\gamma}\sqrt{nh_n^{-1}}
    \end{align} Then the following statements are true.
    \begin{itemize}
    \item [(i)] Under $H_0(\Delta)$ we have  
    \begin{align}
    \label{p44}
         \limsup_{n \to \infty}\p(\hat T_{n,\Delta}>q^*_{1-\alpha})\leq \alpha~.
    \end{align}    
    \item [(ii)] Under $H_1(\Delta)$ we have
    \begin{align}
    \label{p45}
        \lim_{n \to \infty}\p(\hat T_{n,\Delta}>q_{1-\alpha}^{*})=1.
    \end{align}
\end{itemize} 
\end{theorem}

\begin{remark} ~~
\label{pr3}
    \rm 
    \begin{itemize}
        \item[1)] An important question from a practical point of view is the choice of the threshold $\Delta > 0 $, which depends on the specific application under consideration and in some cases might be difficult to determine. In those cases one can also determine a threshold from the data which can serve as measure of evidence against a white noise assumption.
   
    To be precise, note that the hypotheses $H_0(\Delta)$ in \eqref{h11} are nested, that the test statistic \eqref{p3b} is monotone in $\Delta$ and that the quantile  $\hat q^*_{1-\alpha}$ does not depend on $\Delta$. Consequently,  rejecting $H_0(\Delta)$ for $\Delta=\Delta_1$ also implies rejecting $H_0(\Delta)$ for all $\Delta<\Delta_1$. The sequential rejection principle then yields that we may simultaneously test the hypotheses \eqref{p3} for different choices of $\Delta\geq 0$ until we find the minimum value $\hat \Delta_\alpha$  for which $H_0(\Delta_0)$ is not rejected, that is 
    \begin{align}
        \hat \Delta_\alpha:=\min \big \{\Delta \ge 0 \,| \, \hat T_{n,\Delta}\leq \hat q^*_{1-\alpha} \big  \}=\big(\hat d_{\infty,n}-\hat q^*_{1-\alpha}(nh_n)^{-1/2}\big)\lor 0~.
    \end{align}
    Consequently, one may postpone the selection of $\Delta$ until one has seen the data. 
    \item[2)] The restrictions on $l_n$ can be weakened when allowing them to depend on the choices of $d_n$ and $h_n$.
    \end{itemize}
    
\end{remark}

\section{Finite Sample Properties}
\label{s3}
  \def\theequation{3.\arabic{equation}}	
  \setcounter{equation}{0}
In this section we will investigate the finite sample properties of the proposed procedure by means of a simulation study. We also illustrate the new methods by an application to a real data set. 

\subsection{Simulation study}
For the simulation study we consider a stationary and a locally stationary AR(1) process. To be precise we will consider time series of the form
\begin{align}
\label{p49}
    X_i=\phi(i/n)X_{i-1}+\epsilon_j \quad i=1,...,n
\end{align}
where $\{\epsilon_j\}$ is an iid sequence of standard normal variables and $\phi:[0,1]\rightarrow [-1,1]$ is a smooth function. For concreteness we consider the choices
\begin{align} 
\label{p50}
    \phi_1(t)&=0.2\sin(2\pi t)\\
    \label{p61}
    \phi_2(t)&=0.2
\end{align}
for the locally stationary and stationary case, respectively.

Regarding the parameter choices (i.e. $l_n,h_n,\rho_n$) we proceed as follows: 
\begin{enumerate}
    \item[\textbf{(1)}] We utilize the b.star function from the "np" R package \cite{Hayfield2008} to choose the block length $l_n$ for each lag and then average over all lags. This method employs the methods from \cite{Patton2009} to determine the optimal block length for each lag. Averaging isn't strictly necessary but we do it to keep the method in line with the notation in the paper which uses only a single common block length for all lags.
    \item[\textbf{(2)}] We utilize the dpill method from the "KernSmooth" R package separately for each lag and then average over all $d_n$ lags. This method employs a plugin method to determine the optimal bandwidth for estimating each lag. Averaging isn't strictly necessary but we do it to keep the method in line with the notation in the paper which uses only a single common bandwidth for all lags. We have also tried using cross validation which yielded similar results but was computationally more expensive.
    \item[\textbf{(3)}] We choose $\rho_n=0.1\hat \sigma \sqrt{\frac{\log(nh)}{nh}}$ 
    where 
    \begin{align}
        \hat \sigma^2=\frac{1}{d_n}\sum_{i=1}^{d_n}\frac{1}{n-i}\sum_{j=1}^{n-i}(\hat \gamma(h,j/n)-\bar \gamma(h))^2 
    \end{align}
    where $\bar \gamma(h)$ is the mean of $\hat \gamma(h,j/n), j=1,...,n-h$. This follows similar choices as in \cite{Aue2020} and \cite{buecher21} but also incorporates an additional factor that accounts for data specific variations. One could refine this factor further, possibly adjusting the threshold $\rho_n$ used in the estimation of the extremal sets to vary for different $(t,h)$, but this choice yielded reasonable results in simulations. We leave refinement in this direction as a future avenue of research. 
\end{enumerate}

We display the results of the simulations in Figure \ref{Fig:2} below where we show the
empirical rejection probabilities of the test \eqref{p92} for the hypotheses \eqref{h11}  with nominal level $\alpha=0.1$. The vertical line corresponds to the true value $d_\infty $, which is
\begin{align}
    d_\infty=0.2
\end{align}
both in the stationary and the locally stationary case. We choose $n=600$. To obtain the empirical rejection rates we generated 1000 data sets each. We performed our test with $d_n=3,10,20$ lags for each of the functions \eqref{p50}.

\begin{figure}[H]
    \centering

    \includegraphics[scale= 0.34]{./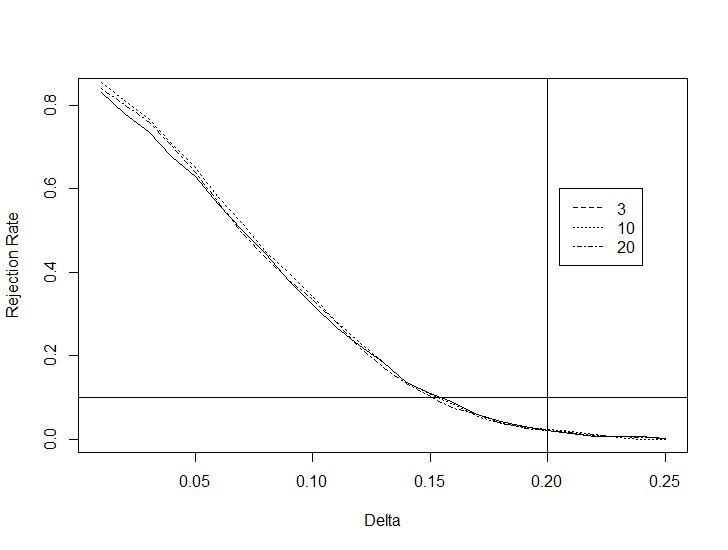}  
    \includegraphics[scale= 0.34]{./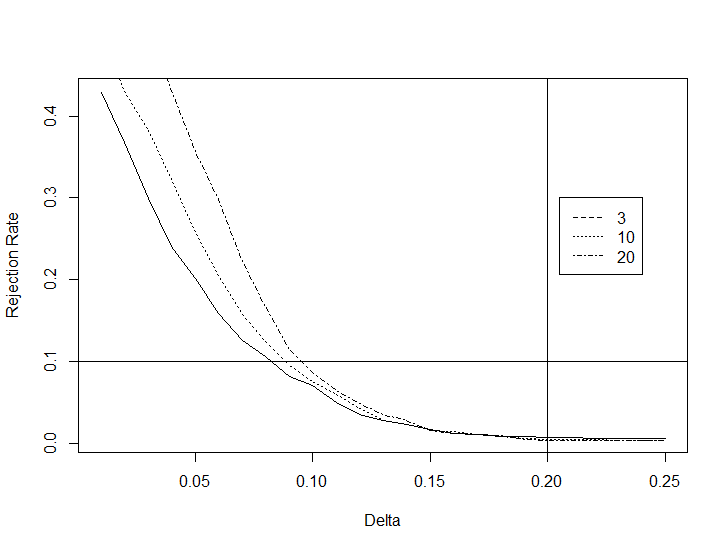}

    \caption{\it    
    Empirical rejection probabilities of the test \eqref{p92} for the hypotheses \eqref{h11}. The sample size is $n=600$, the number of lags is $d_n=3,10,20$ and the maximal correlation is given by $d_\infty=0.2$. The associated time series are given in \eqref{p49} where $\phi$ is given by \eqref{p50} on the left and by \eqref{p61} on the right.
 } 
    \label{Fig:2}
\end{figure}

We observe that the rejection probabilities are decreasing with an increasing threshold $\Delta$, which reflects the fact that it is easier to detect
deviations for smaller thresholds. Note that small values of $\Delta$ correspond to the alternative and that  the power of the test quickly increases as $\Delta$ is decreasing. On the other hand, values of $\Delta \geq d_\infty$ correspond to the null hypothesis and the test keeps its nominal level. These results  confirm the asymptotic  theory in  Theorem \ref{t5}. Comparing the stationary to the locally stationary case we observe that, while the test is conservative in both scenarios, it is much more so in the locally stationary case. This is explained by the fact that estimating the extremal sets accurately is much more difficult in the locally stationary case as $\gamma(h,t)$ is maximized along a sparse set of $t$. The choice of $d_n$ has little impact in the stationary case while larger $d_n$ lead to better detection rates in the locally stationary case.

\subsection{Real data example}
We apply the proposed method to the daily closing prices of the S\&P500 stock index which is available on Yahoo and can be accessed via the quantmod R package \cite{Ryan24}. As is classical in financial data analysis we consider the log returns of the closing prices. We consider the data from 01.01.1980 to 31.12.1999 (i.e. n=5054) whose log returns and (classical) autocorrelation function are displayed in figure \ref{Fig:1}.
\begin{figure}[H]
    \centering

    \includegraphics[scale= 0.34]{./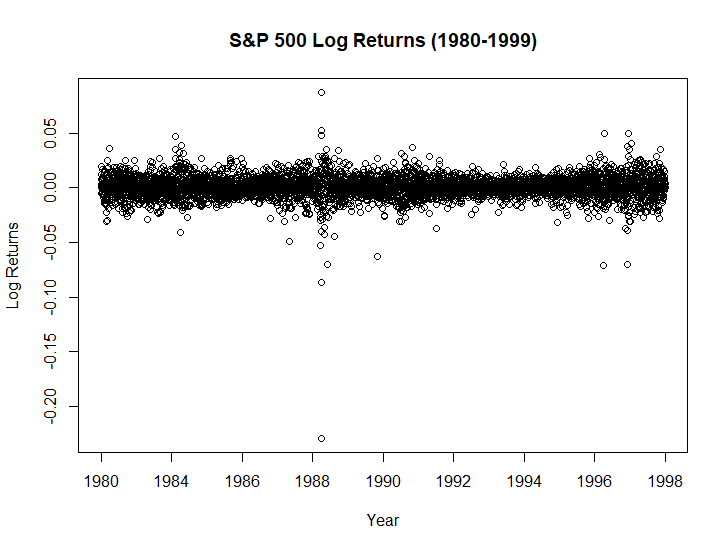}  
    \includegraphics[scale= 0.34]{./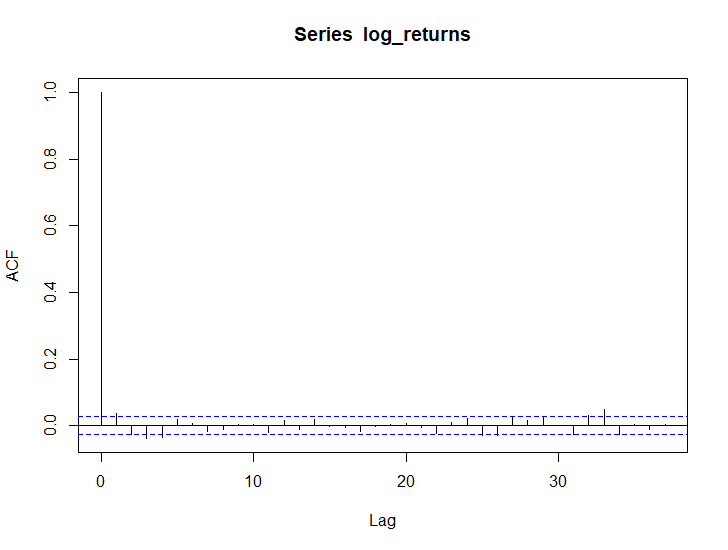}

    \caption{\it    
    Left Side: Log Returns of the S\&P500 between 1980 and 1999. Right Side: Autocorrelation plot of the log returns of the S\&P500 between 1980 and 1999
 } 
    \label{Fig:1}
\end{figure}

Visual inspection immediately yields that the lag correlations are very small, nonetheless standard tests like the Box-Ljung test reject a white noise hypothesis with p values significantly smaller than 0.01 when considering lags up to size 100 and smaller than 0.05 for lags up to size 500. Applying the method proposed in this paper to the local autocorrelation function at level $\alpha=0.05$ yields that $H_0(\Delta)$ is rejected only for $\Delta<0.029$ indicating that the log returns are in fact very close to a (weak sense) white noise sequence. Here we have chosen $l_n, h_n, \rho_n$ in the same way as in the previous section about synthetic data, $d_n$ was set to 3 but similar results are also valid for higher numbers of lags. The extremely small size of $\Delta$ combined with the substantial sample size reflects the view presented in \cite{Fama1970} that "statistically "significant" deviations from zero covariance are not necessarily a basis for rejecting the efficient markets model." Further we also provide a measure of efficiency (the size of $\Delta$) which is much more compatible with "measuring efficiency [of the hypothesis] rather than [...] testing it." (see Section 1.5 in \cite{Campbell97} ) than the usual approach where one investigates hypotheses like \ref{h1}. \\

\textbf{Acknowledgements}
This research was (partially) funded in the course of TRR 391 Spatio-temporal Statistics for the Transition of Energy and Transport (520388526) by the Deutsche Forschungsgemeinschaft (DFG, German Research Foundation).

\newpage

\section{Proofs}
\def\theequation{4.\arabic{equation}}	
  \setcounter{equation}{0}

\subsection{Statement and Proof of  Theorem \ref{t3}}
\begin{theorem}
\label{t3}
Assume that (M), (D), (K1), (K2) (see \ref{assumptions}) hold. We then have that
    \begin{align}        
        &\sup_{ h \leq d_n, t \in I_n}\Big|\tilde \gamma_h(t)-\gamma_h(t)-\frac{1}{nh_n}\sum_{j=1}^{n-h}\epsilon_{jh}K^*_{h_n}(j/n-t)\Big|\\
        = &O(h_n^3+\frac{1}{nh_n}+\frac{d_n}{n}),
   \end{align}
     where $K^*=2\sqrt{2}K(\sqrt{2}x)-K(x)$.   
\end{theorem}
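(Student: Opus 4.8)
I would prove the expansion for the Schucany-corrected estimator $\hat\gamma_h=\hat\gamma_h^{h_n}$ of \eqref{p17}; this is the estimator whose ``equivalent kernel'' is $K^{*}(x)=2\sqrt2\,K(\sqrt2 x)-K(x)$, which is precisely why that kernel appears in the statement. The strategy is to linearise the minimiser of \eqref{p401} as a weighted average of the $Y_{jh}=X_{j+h}X_j$, split it into a deterministic bias part and a stochastic part, and control each uniformly over $h\le d_n$ and $t\in I_{n,h}$. Solving the normal equations of \eqref{p401} gives, for a bandwidth $b$, $\tilde\gamma_h^{b}(t)=\sum_{j=1}^{n-h}w_j^{b}(t)Y_{jh}$ with $w_j^{b}(t)=\tfrac1{nb}K_b(\tfrac jn-t)\{S_2^{b}(t)-(\tfrac jn-t)S_1^{b}(t)\}/\{S_0^{b}(t)S_2^{b}(t)-S_1^{b}(t)^2\}$ and $S_k^{b}(t)=\tfrac1{nb}\sum_{j=1}^{n-h}(\tfrac jn-t)^kK_b(\tfrac jn-t)$; these weights satisfy the exact identities $\sum_jw_j^{b}(t)=1$ and $\sum_jw_j^{b}(t)(\tfrac jn-t)=0$. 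For $t\in I_{n,h}$ the support $[t-b,t+b]$ lies essentially inside $[0,1]$, so comparing $S_k^{b}(t)$ with $\int u^kK(u)\,du$ and using $nh_n\to\infty$ and (K1) gives, uniformly, $S_0^{b}(t)=1+O((nb)^{-1})$, $S_1^{b}(t)=O((nb)^{-1})$, $S_2^{b}(t)=\mu_2(K)b^2(1+O((nb)^{-1}))$, hence $w_j^{b}(t)=\tfrac1{nb}K_b(\tfrac jn-t)+r_j^{b}(t)$ with $\sum_j|r_j^{b}(t)|=O((nb)^{-1})$ and $\max_j|r_j^{b}(t)|=O((nb)^{-2})$.

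Next I would isolate the bias. Writing $Y_{jh}=\gamma_h(j/n)+\epsilon_{jh}$ and Taylor-expanding $\gamma_h$ to second order about $t$ — legitimate by (K2), under which $\gamma_h''$ is Lipschitz with an $h$-free constant, so the remainder is $O(|\tfrac jn-t|^3)$ — the two moment identities annihilate the linear term, and one is left with $\gamma_h(t)+\tfrac12\gamma_h''(t)\mu_2(K)b^2+O(b^3)+O((nb)^{-1})$ for the deterministic part of $\tilde\gamma_h^{b}(t)$, uniformly over $h\le d_n,\ t\in I_{n,h}$. Forming the combination \eqref{p17}, the $b^2$-terms cancel because $2(h_n/\sqrt2)^2-h_n^2=0$, which leaves a bias of order $O(h_n^3)+O((nh_n)^{-1})$. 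The remaining $O(d_n/n)$ in the bound comes from \eqref{a2}: it forces $\E[\epsilon_{jh}]=O(d_n/n)$, and since the kernel weights carry $O(1)$ total mass this contributes $O(d_n/n)$ to any kernel-smoothed sum of the $\epsilon_{jh}$; the truncation of the fit at $j=n-h$, which matters only for $t$ within $O(d_n/n)$ of the right endpoint of $I_{n,h}$, is of the same order and I would absorb it here.

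The substantive step is the uniform control of the stochastic remainder. With $\hat w_j(t)=2w_j^{h_n/\sqrt2}(t)-w_j^{h_n}(t)$ one has to show that, with probability tending to $1$, $\sup_{h\le d_n,\,t\in I_{n,h}}|\sum_{j=1}^{n-h}\hat w_j(t)\epsilon_{jh}-\tfrac1{nh_n}\sum_{j=1}^{n-h}\epsilon_{jh}K^*_{h_n}(\tfrac jn-t)|=o((nh_n)^{-1})$; by the first step this difference equals $\sum_j\hat r_j(t)\epsilon_{jh}$ for deterministic $\hat r_j(t)$ with $\sum_j|\hat r_j(t)|=O((nh_n)^{-1})$ and $\max_j|\hat r_j(t)|=O((nh_n)^{-2})$. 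For fixed $(h,t)$, $\{\epsilon_{jh}\}_j$ is a $\beta$-mixing array with $(2J+\delta)$-th moments by (M), (D), and its covariances are summable, so a Rosenthal-type inequality for mixing sequences yields $\E|\sum_j\hat r_j(t)\epsilon_{jh}|^{2J}\lesssim(\max_j|\hat r_j|\sum_j|\hat r_j|)^{J}+\max_j|\hat r_j|^{2J-1}\sum_j|\hat r_j|\,\E|\epsilon_{jh}|^{2J}\lesssim(nh_n)^{-3J}$. A Markov bound plus a union bound over the $d_n$ lags and a grid of locations of polynomial mesh — legitimate because $\hat r_j(\cdot)$ and $K^*_{h_n}(\tfrac jn-\cdot)$ are Lipschitz in $t$ with constants that are a power of $h_n^{-1}$, tested against $\max_j|\epsilon_{jh}|=O_\p(n^{1/(2J+\delta)})$ — then gives the claim as soon as $d_n\,n\,(nh_n)^{-J}\to0$, which holds since $J\ge8$, $h_n\sim n^{-\gamma_1}$ with $\gamma_1\le1/5$ and $d_n\lesssim\sqrt{nh_n^{-1}}$. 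Finally the scaling identity $\tfrac{2}{n(h_n/\sqrt2)}K_{h_n/\sqrt2}(u)=\tfrac{2\sqrt2}{nh_n}K(\tfrac{\sqrt2 u}{h_n})$ shows that the leading parts of the $\hat w_j(t)$ combine into $\tfrac1{nh_n}K^*_{h_n}(\tfrac jn-t)$, and combining with the bias step concludes the proof (read, as Theorems \ref{t4} and \ref{t5} are, as holding with probability tending to $1$).

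I expect this last step to be the main obstacle. The equivalent-kernel algebra, the Taylor/Schucany cancellation of the $O(h_n^2)$ bias, and the $O(d_n/n)$ bookkeeping from \eqref{a2} are routine; the real work is getting the stochastic remainder under control \emph{uniformly} over the diverging number of lags and the continuum of locations. This is exactly where assumptions (M) (with $J\ge8$), (D) (both to make the covariances of the products $X_{j+h}X_j$ summable and to run the Rosenthal-type bound) and the bandwidth restriction in (K1) have to pull together, precisely so that the union-bound probability $d_n n(nh_n)^{-J}$ stays negligible.
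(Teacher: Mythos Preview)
Your approach is essentially the one the paper invokes: the paper's proof simply cites Lemma~C.2 of \cite{Dette2015} (the standard interior equivalent-kernel expansion for the local linear estimator) together with \eqref{a2} for the $O(d_n/n)$ contribution, and remarks that the uniform bounds in (K2) make the Taylor remainders uniform in $h$; you have written out exactly that argument, including the Schucany cancellation that produces $K^{*}$ and the $O(h_n^3)$ bias.

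One small slip in your stochastic-remainder step: under (M) the series $X_j$ has $2J+\delta$ moments, so $\epsilon_{jh}=X_{j+h}X_j-\gamma_h(j/n)$ has only $J+\delta/2$ moments, not $2J+\delta$ (cf.\ the paper's inequality $\E[\epsilon_{jh}^{p}]^{1/p}\lesssim\sup_k\E[X_k^{2p}]^{1/(2p)}$). Run your Rosenthal bound at order $J$ rather than $2J$; this gives $\E\big|\sum_j\hat r_j(t)\epsilon_{jh}\big|^{J}\lesssim(nh_n)^{-3J/2}$, and the union-bound requirement becomes $d_n\,n\,(nh_n)^{-J/2}\to 0$, which still holds under (K1) and $J\ge 8$.
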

\begin{proof}
Define $\hat \epsilon_{jh}=X_{j+h}X_j-\gamma_h(j/n)$. The desired result follows by the same arguments as in the proof of Lemma C.2 from \cite{Dette2015} combined with \eqref{a2}.
The uniformity in condition 2) of Theorem \ref{t3} serves to ensure that the Taylor approximation errors are uniform in $h$. 
    
\end{proof}

\subsection{Proof of Theorem \ref{t4}}

To make expressions more concise we define the process
\begin{align}
    \mathcal{H}_n(h,t)=\frac{1}{\sqrt{nh_n}}\sum_{j=1}^{n-h}\epsilon_{jh}K^*_{h_n}(j/n-t)~.
\end{align}
We further note that the mixing coefficients $\tilde \beta(k)$ of $\epsilon_{jh}$ satisfy the inequality
\begin{align}
    \tilde \beta(k) \leq \beta(k-h)   
\end{align}
and that similar inequalities also hold for vectors such as $\{(\epsilon_{j1},...,\epsilon_{jd_n})\}_{1 \leq j \leq n-d_n}$. One can extend this to $1 \leq j \leq n$ by appropriately padding the vectors with zeroes.
Further we can always bound the $p$-th moments of $\epsilon_{jh}$ by those of the original time series, i.e.
\begin{align}
    \E[\epsilon_{jh}^p]^{1/p}\lesssim \sup_{k \in \N}\E[X_k^{2p}]^{1/(2p)}
\end{align}
We will make use of of these facts repeatedly without further mention. 

\begin{proof}{ of \textbf{Theorem \ref{t4}}}\\
The Theorem follows immediately by combining Lemma \ref{pl7} with Theorem \eqref{t3}.
\end{proof}

\begin{Lemma}
\label{pl7}
   Let $\rho_n$ be any sequence such that  $\rho_n^{-1}=o\big ((nh_n)^{1/2}d_n^{-1/J}h_n^{\frac{J+1}{J^2}}\big )$. Assume that $d_\infty>0$. We then have that with probability tending to one that 
   \begin{align}
       \hat d_{\infty,n}-d_{\infty,n} &\leq \sup_{(h,t) \in \hat{\mathcal{E}}_{\rho_n}}\hat s_n(h,t)(\hat \gamma(h,t)-\gamma(h,t)))+o_\p(n^{-\gamma}(nh_n)^{-1/2})\\
       & \leq \hat d_{\infty,n}-d_{\infty,n} +O_\p(\rho_n) 
   \end{align}
   The same also holds true when $\hat{\mathcal{E}}_{\rho_n}$ and $\hat s_n(h,t)$ are replaced by $\mathcal{E}_{\rho_n}$ and  $s(h,t)$, respectively.
\end{Lemma}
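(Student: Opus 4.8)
The plan is to reduce the whole statement to a short ``argmax sandwiching'' argument, the only nontrivial analytic input being the uniform rate
\[
  \eta_n:=\sup_{1\le h\le d_n,\; t\in I_{n,h}}\big|\hat\gamma(h,t)-\gamma(h,t)\big|=o_\p(\rho_n),
\]
which I would extract from Theorem~\ref{t3}. Two trivial facts are used repeatedly: (i) for real $a,b$ and $\sigma=\operatorname{sign}(a)$ one has $|a|-|b|\le\sigma(a-b)$, since $\sigma b\le|b|$; and (ii) because $d_\infty>0$, $d_n\to\infty$ and $I_{n,h}$ eventually contains every fixed $t\in(0,1)$, one has $d_{\infty,n}\ge d_\infty/2>0$ for all large $n$ (deterministically), hence — using $|\hat d_{\infty,n}-d_{\infty,n}|\le\eta_n$ — also $\hat d_{\infty,n}\ge d_\infty/3>0$ with probability tending to one (we take $\rho_n\to0$). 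Finally I would silently identify the continuum maximum defining $\hat d_{\infty,n}$ (resp.\ $d_{\infty,n}$) with its discretised counterpart over $\{1/n,\dots,1\}\cap I_{n,h}$ in \eqref{p201}: by Lipschitz continuity of $t\mapsto\hat\gamma(h,t)$ (constant $O_\p(h_n^{-1})$ by (K1)) and of $t\mapsto\gamma(h,t)$ (constant $H$ by (K2)) this costs only an extra $O_\p((nh_n)^{-1})$, and $(nh_n)^{-1}=o(n^{-\gamma}(nh_n)^{-1/2})=o(\rho_n)$.

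\emph{Upper bound (first inequality).} Let $(\hat h,\hat t)$ maximise $|\hat\gamma(h,t)|$ over the grid, so $|\hat\gamma(\hat h,\hat t)|\ge\hat d_{\infty,n}-O_\p((nh_n)^{-1})$ and hence $(\hat h,\hat t)\in\hat{\mathcal E}_{\rho_n}$ with probability tending to one (this is where the grid/continuum reconciliation enters). Since $|\gamma(\hat h,\hat t)|\le d_{\infty,n}$, fact (i) gives
\[
  \hat d_{\infty,n}-d_{\infty,n}\le|\hat\gamma(\hat h,\hat t)|-|\gamma(\hat h,\hat t)|+O_\p\big((nh_n)^{-1}\big)\le\hat s_n(\hat h,\hat t)\big(\hat\gamma(\hat h,\hat t)-\gamma(\hat h,\hat t)\big)+o_\p\big(n^{-\gamma}(nh_n)^{-1/2}\big),
\]
and the right-hand side is at most $\sup_{(h,t)\in\hat{\mathcal E}_{\rho_n}}\hat s_n(h,t)(\hat\gamma(h,t)-\gamma(h,t))+o_\p(n^{-\gamma}(nh_n)^{-1/2})$. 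For the version with $\mathcal E_{\rho_n}$ and $s$ one additionally notes $|\gamma(\hat h,\hat t)|\ge|\hat\gamma(\hat h,\hat t)|-\eta_n\ge d_{\infty,n}-\rho_n$ with probability tending to one, so $(\hat h,\hat t)\in\mathcal E_{\rho_n}$, and that $\hat\gamma(\hat h,\hat t),\gamma(\hat h,\hat t)$ are bounded away from $0$ while $\eta_n\to0$, so $s(\hat h,\hat t)=\hat s_n(\hat h,\hat t)$; the same chain then applies with $s$.

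\emph{Lower bound (second inequality).} Take $(h,t)\in\mathcal E^+_{\rho_n}$ (the $\mathcal E^-$-case is symmetric and $\mathcal E_{\rho_n}=\mathcal E^+_{\rho_n}\cup\mathcal E^-_{\rho_n}$). Then $\gamma(h,t)\ge d_{\infty,n}-\rho_n\ge d_\infty/2-\rho_n>0$ for large $n$, so $s(h,t)=+1$, and with $\hat\gamma(h,t)\le\hat d_{\infty,n}$,
\[
  s(h,t)\big(\hat\gamma(h,t)-\gamma(h,t)\big)=\hat\gamma(h,t)-\gamma(h,t)\le\hat d_{\infty,n}-(d_{\infty,n}-\rho_n)=(\hat d_{\infty,n}-d_{\infty,n})+\rho_n;
\]
taking the supremum over $\mathcal E_{\rho_n}$ gives the $s$-version. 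For $(h,t)\in\hat{\mathcal E}^+_{\rho_n}$ one has $\hat\gamma(h,t)\ge\hat d_{\infty,n}-\rho_n>0$ with probability tending to one, so $\hat s_n(h,t)=+1$, and $\gamma(h,t)=\hat\gamma(h,t)-(\hat\gamma-\gamma)(h,t)\ge(\hat d_{\infty,n}-\rho_n)-\eta_n\ge d_{\infty,n}-2\eta_n-\rho_n$ using $\hat d_{\infty,n}\ge d_{\infty,n}-\eta_n$; hence $\hat s_n(h,t)(\hat\gamma-\gamma)(h,t)=\hat\gamma(h,t)-\gamma(h,t)\le(\hat d_{\infty,n}-d_{\infty,n})+2\eta_n+\rho_n=(\hat d_{\infty,n}-d_{\infty,n})+O_\p(\rho_n)$ by the uniform rate, and taking the supremum over $\hat{\mathcal E}_{\rho_n}$ finishes the proof.

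\emph{The main obstacle.} Everything above is bookkeeping once $\eta_n=o_\p(\rho_n)$ is in hand; establishing this is the real work, and is where (M), (D), (K1) enter. By Theorem~\ref{t3},
\[
  \eta_n\le\sup_{1\le h\le d_n,\;t\in I_{n,h}}\Big|\frac1{nh_n}\sum_{j=1}^{n-h}\epsilon_{jh}K^*_{h_n}(j/n-t)\Big|+O\big(h_n^3+(nh_n)^{-1}+d_n/n\big).
\]
The deterministic remainder is $o((nh_n)^{-1/2})$: $\gamma_1>1/7$ in (K1) forces $h_n^3=o((nh_n)^{-1/2})$, and $d_n\lesssim n^{-\gamma}\sqrt{n/h_n}$ forces $d_n/n\lesssim n^{-\gamma}(nh_n)^{-1/2}$; since the hypothesis on $\rho_n$ reads $(nh_n)^{-1/2}d_n^{1/J}h_n^{-(J+1)/J^2}=o(\rho_n)$ and the blow-up factor exceeds $1$, this is $o(\rho_n)$. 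The stochastic term is handled by a maximal inequality for the $\beta$-mixing array $\{\epsilon_{jh}\}$ (moments from (M), dependence from (D)); crucially, the smoothness and compact support of $K^*$ let one replace, via a chaining argument, the maximum over the $n$-point grid in $t$ by a maximum over only $O(h_n^{-1})$ effective points per lag, so that one is bounding $O(d_n h_n^{-1})$ normalised sums each with $\gtrsim J$ finite moments; Markov's inequality then gives order $(nh_n)^{-1/2}(d_n h_n^{-1})^{1/J}$, which is $o(\rho_n)$ because $(d_n h_n^{-1})^{1/J}=h_n^{1/J^2}\,d_n^{1/J}h_n^{-(J+1)/J^2}$ and the extra factor $h_n^{1/J^2}$ absorbs any logarithmic losses — this is precisely the slack built into the condition on $\rho_n$. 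Combining the two bounds yields $\eta_n=o_\p(\rho_n)$, which closes the argument.
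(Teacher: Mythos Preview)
Your proof is correct and follows the same strategy as the paper's, which is extremely terse: the sandwiching step is outsourced to Lemma~5.2 of \cite{Bastian2024}, the uniform rate $\eta_n=o_\p(\rho_n)$ and the sign/extremal-set consistency are supplied by Lemmas~\ref{pl5} and~\ref{pl6}, and the grid discretisation is handled via Theorem~\ref{t3} together with the chaining bound~\eqref{p27}. Your explicit argmax-sandwiching and sign arguments are precisely what the paper hides behind the external citation, and your ``main obstacle'' paragraph is in effect a sketch of Lemma~\ref{pl5} (the paper's bound there is $d_n^{1/J}h_n^{-(J+1)/J^2}$ rather than your $(d_nh_n^{-1})^{1/J}$, but either suffices for $\eta_n=o_\p(\rho_n)$).
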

\begin{proof}
    Using exactly the same arguments  as those in the proof of Lemma 5.2 in \cite{Bastian2024} one obtains the statement without the restriction $t \in \{1/n,...,1\}$, for that Lemma \ref{pl5} and \ref{pl6} are used in place of Lemma 5.4 from the same reference. The desired statement follows by combining Theorem \ref{t3} and equation \eqref{p27} ($\tau=1/(nh_n), \nu=1/n)$  to obtain that the the restriction $t \in \{1/n,...,1\}$ incurs only an error of size $o_\p(n^{-\gamma}(nh_n)^{-1/2})$.
\end{proof}

\begin{Lemma} 
\label{pl4}
    Define
    \begin{align}
        d(t,s):=|t-s|h_n^{-1}~,
    \end{align} then we have for any fixed $1 \leq h \leq d_n$ and any $\tau>0$ that
    \begin{align}
       \sup_{d(t,s)<\tau, t,s \in I_{n,h}}|\mathcal{H}_n(h,t)-\mathcal{H}_n(h,s)| = O_\p(h_n^{-1/J}+\tau h_n^{-2/J})
    \end{align}
    Further we also obtain
    \begin{align}
    \label{p24}
        \E\Big [\sup_{d(t,s)<\tau, t,s \in I_{n,h}}|\mathcal{H}_n(h,t)-\mathcal{H}_n(h,s)|^J\Big ]^{1/J}
        &\lesssim h_n^{-1/J}+\tau h_n^{-2/J}  ~. 
    \end{align}
    where the implied constants do not depend on $\tau$.
\end{Lemma}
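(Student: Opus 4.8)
\medskip
\noindent\textbf{Proof plan.} Fix $1\le h\le d_n$; all implied constants below depend only on $K$, $J$, $\delta$ and the constants in (M), (D), not on $n$, $t$, $s$ or $\tau$. Put $\rho(t,s):=\min(d(t,s),1)$, and recall that $\epsilon_{jh}=X_{j+h}X_j-\gamma_h(j/n)$ has mean $O(d_n/n)$ by \eqref{a2}, which (as one checks) perturbs every increment considered below by at most $\lesssim\rho(t,s)$, so I treat $\epsilon_{jh}$ as centered throughout. The plan is to first reduce the assertion to the pointwise moment bound
\[
\Big(\E\big|\mathcal{H}_n(h,t)-\mathcal{H}_n(h,s)\big|^J\Big)^{1/J}\;\lesssim\;\rho(t,s),\qquad t,s\in I_{n,h}.
\]
Writing $\mathcal{H}_n(h,t)-\mathcal{H}_n(h,s)=(nh_n)^{-1/2}\sum_{j=1}^{n-h}\epsilon_{jh}b_j$ with $b_j:=K^*_{h_n}(j/n-t)-K^*_{h_n}(j/n-s)$, this rests on three ingredients. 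Since $K$ is twice differentiable and supported on $[-1,1]$ by (K1), $K^*$ is Lipschitz with support in $[-1,1]$, so $b_j=0$ unless $j/n$ lies within $h_n$ of $t$ or of $s$, $|b_j|\lesssim\rho(t,s)$, and hence $\sum_jb_j^2\lesssim nh_n\,\rho(t,s)^2$ and $\sum_j|b_j|^J\lesssim nh_n\,\rho(t,s)^J$. Next, $\{\epsilon_{jh}\}_j$ is $\beta$-mixing with $\tilde\beta(k)\le\beta(k-h)$ and has bounded moments of order $J+\delta/2$ by (M), so the covariance inequality for $\beta$-mixing sequences together with the summability in (D) gives $\mathrm{Var}\big(\sum_j\epsilon_{jh}b_j\big)\lesssim\sum_jb_j^2$. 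Finally, the precise form of (D) is exactly what is needed for a Rosenthal-type $J$-th moment inequality for $\beta$-mixing sequences, $\E\big|\sum_j\epsilon_{jh}b_j\big|^J\lesssim\big(\sum_jb_j^2\big)^{J/2}+\sum_j|b_j|^J$. Dividing by $(nh_n)^{J/2}$ and using $nh_n\to\infty$ (so $(nh_n)^{1-J/2}\to0$) yields the displayed increment bound.

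The pointwise bound is then upgraded to the supremum by a chaining argument on $(I_{n,h},\rho)$, using $\{d(t,s)<\tau\}\subseteq\{\rho(t,s)<\tau\}$ and the covering-number estimate $N(I_{n,h},\rho,\varepsilon)\lesssim(h_n\varepsilon)^{-1}$ for $\varepsilon\in(0,1]$. Concretely this is a multi-resolution grid decomposition with three relevant scales. (i) The whole interval $I_{n,h}$, of $d$-diameter $\asymp h_n^{-1}$: covering it by $\asymp h_n^{-1}$ points of $d$-mesh $\asymp1$ and anchoring at one point, a union bound over $\asymp h_n^{-1}$ increments each of $L^J$-norm $\lesssim1$ contributes a term of order $h_n^{-1/J}$. (ii) A grid of $d$-mesh $\asymp h_n$ (cardinality $\asymp h_n^{-2}$): a $d$-ball of radius $\tau$ is crossed along it in $\lesssim\tau/h_n+1$ consecutive unit steps, while a union bound over the $\asymp h_n^{-2}$ one-step increments, each of $L^J$-norm $\lesssim h_n$, gives $\lesssim(h_n^{-2})^{1/J}h_n=h_n^{1-2/J}$ per step; the total is $\lesssim(\tau/h_n+1)h_n^{1-2/J}\lesssim\tau h_n^{-2/J}+h_n^{-1/J}$. (iii) A polynomially fine grid of $t$-mesh $\nu_n\to0$, used only to bridge from the continuum down to grid (ii). The geometric refinement between consecutive scales gives convergent series (here $1-1/J>0$ enters), so these contributions combine to $\lesssim h_n^{-1/J}+\tau h_n^{-2/J}$ with constants free of $\tau$; for $\tau\ge1$ one instead bounds the supremum by $2\sup_{t\in I_{n,h}}|\mathcal{H}_n(h,t)-\mathcal{H}_n(h,t_0)|$ and invokes scale (i) alone, getting $\lesssim h_n^{-1/J}\le h_n^{-1/J}+\tau h_n^{-2/J}$.

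The step I expect to be the main obstacle is (iii): bounding the discretization error from replacing $I_{n,h}$ by a finite grid, since $\mathcal{H}_n(h,\cdot)$ is only polynomially (not boundedly) controlled. For this I would use the crude deterministic estimate
\[
\sup_{u\in I_{n,h}}\big|\partial_u\mathcal{H}_n(h,u)\big|\;\le\;\frac{\|(K^*)'\|_\infty}{h_n\sqrt{nh_n}}\,\#\{j:|j/n-u|\le h_n\}\,\max_{1\le j\le n}|\epsilon_{jh}|\;\lesssim\;\sqrt{n/h_n}\,\max_{1\le j\le n}|\epsilon_{jh}|,
\]
valid because $K\in C^2$, together with $\big(\E\big[\max_{1\le j\le n}|\epsilon_{jh}|^J\big]\big)^{1/J}\le\big(\sum_j\E|\epsilon_{jh}|^J\big)^{1/J}\lesssim n^{1/J}$ from (M); the mean value theorem then bounds every continuum-to-grid increment by $\nu_n\sqrt{n/h_n}\,\max_j|\epsilon_{jh}|$ in $t$-units, and a direct computation shows that $\nu_n$ can be taken as a fixed negative power of $n$ making this $o_\p(h_n^{-1/J})$ precisely in the regime $1/7<\gamma_1\le1/5$, $J\ge8$ imposed by (K1) and (M). Keeping track so that the final constant stays independent of $\tau$ is the only real subtlety; the rest is routine. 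This establishes \eqref{p24}, whence the $O_\p$ assertion of the Lemma follows at once by Markov's inequality applied to the $J$-th moment, uniformly in $\tau$.
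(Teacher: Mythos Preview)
Your proposal is correct and follows the same two-step strategy as the paper: first establish the pointwise increment bound $\big(\E|\mathcal H_n(h,t)-\mathcal H_n(h,s)|^J\big)^{1/J}\lesssim d(t,s)$ via a Rosenthal-type moment inequality for $\beta$-mixing sequences (the paper cites Theorem~3 of \cite{Yoshi78} for exactly this, which is the ingredient you describe under (D)), then upgrade to the supremum by chaining. The difference is purely in the execution of the chaining step. The paper applies Theorem~2.2.4 of \cite{Wellner1996} (the maximal inequality for increment processes) as a black box: with packing numbers $D(\epsilon,d)\lesssim (h_n\epsilon)^{-1}$ and the choice $\eta=1$ one reads off directly
\[
\Big\|\sup_{d(t,s)<\tau}|\mathcal H_n(h,t)-\mathcal H_n(h,s)|\Big\|_J\;\lesssim\;h_n^{-1/J}\int_0^{1}\epsilon^{-1/J}\,d\epsilon+\tau\, h_n^{-2/J},
\]
and Markov gives the $O_\p$ statement. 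You instead re-derive this by hand through a multi-scale grid decomposition and handle the continuum-to-grid step separately via the crude derivative bound $|\partial_u\mathcal H_n(h,u)|\lesssim\sqrt{n/h_n}\max_j|\epsilon_{jh}|$. That last step is unnecessary once one invokes the van der Vaart--Wellner theorem, since $\mathcal H_n(h,\cdot)$ is continuous in $t$ (hence separable) and the entropy integral converges down to zero; your route is self-contained but considerably more laborious. Incidentally, your scale~(ii) at $d$-mesh $\asymp h_n$ already yields the slightly sharper contribution $h_n^{1-2/J}+\tau h_n^{-2/J}$, so your scale~(i) term $h_n^{-1/J}$ is not actually needed for $\tau<1$; this does not affect correctness since $h_n^{1-2/J}\le h_n^{-1/J}$.
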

\begin{proof}
   Note that
    \begin{align}
        \E[|\mathcal{H}_n(h,t)-\mathcal{H}_n(h,s)|^J]^{1/J}\leq K_1|t-s|h_n^{-1}=K_1d(t,s)~,
    \end{align}    
    which follows by Assumptions (A2) and (A3) and the arguments used for the proof of Theorem 3 from \cite{Yoshi78}. Therefore Theorem 2.2.4 in \cite{Wellner1996} yields that for any $\nu, \tau>0$ and some $K_2>0$ depending only on $J$ and $K_1$ we have 
    \begin{align}   
    \label{p27}
        \E\Big [\sup_{d(t,s)<\tau, t,s \in I_{n,h}}|\mathcal{H}_n(h,t)-\mathcal{H}_n(h,s)|^J\Big ]^{1/J}
        &\lesssim K_2 \big( h_n^{-1/J}\int_0^\nu \epsilon^{-1/J}d\epsilon+\tau h_n^{-2/J}\nu^{-2/J} \big )   \\
        &\lesssim h_n^{-1/J}\nu^{1-1/J}+\tau h_n^{-2/J}   \nu^{-2/J}   
    \end{align}
   Now choose $\nu=1$ and apply Markov to finish the proof.
    
\end{proof}

\begin{Lemma}
\label{pl5}
   We have that 
   \begin{align}
   \label{p26}
        \sup_{1 \leq h \leq d_n, t \in I_{n,h}}|\mathcal{H}_n(h,t)|=O_\p\Big(d_n^{1/J}h_n^{-\frac{J+1}{J^2}}\Big)
   \end{align}
\end{Lemma}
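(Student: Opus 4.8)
The plan is to bound, for each fixed lag $h$ separately, the $J$-th moment of $\sup_{t\in I_{n,h}}|\mathcal{H}_n(h,t)|$ by splitting it into a finite maximum over a grid plus an oscillation term, then to sum these bounds over $h=1,\dots,d_n$ and apply Markov's inequality.

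First I would fix $1\le h\le d_n$, set $\tau_n:=h_n^{1/J}$, and choose a grid $t_1,\dots,t_N\in I_{n,h}$ with mesh $\simeq h_n\tau_n$ in $t$, so that $N\lesssim (h_n\tau_n)^{-1}=h_n^{-(J+1)/J}$ and every $t\in I_{n,h}$ lies within $d$-distance $\tau_n$ of some $t_k$ (recall $d(t,s)=|t-s|h_n^{-1}$ and $I_{n,h}\subseteq[0,1]$). This gives
\begin{align}
\sup_{t\in I_{n,h}}|\mathcal{H}_n(h,t)|\le \max_{1\le k\le N}|\mathcal{H}_n(h,t_k)|+\sup_{\substack{d(t,s)<\tau_n\\ t,s\in I_{n,h}}}|\mathcal{H}_n(h,t)-\mathcal{H}_n(h,s)|.
\end{align}
For the oscillation term I would apply \eqref{p24} of Lemma \ref{pl4} with $\tau=\tau_n$, obtaining an $L^J$-bound of order $h_n^{-1/J}+\tau_n h_n^{-2/J}=2h_n^{-1/J}$, uniformly in $h$. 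For the grid term I need the single-point estimate $\E[|\mathcal{H}_n(h,t)|^J]^{1/J}\lesssim 1$ uniformly in $h,t,n$; this is a Rosenthal/Yoshihara-type moment inequality for the weighted $\beta$-mixing sum $\sum_j\epsilon_{jh}K^*_{h_n}(j/n-t)$, whose $\simeq nh_n$ nonzero summands are uniformly $L^J$-bounded with summable weights, and which is available under (M) and (D) (it is essentially the bound already underlying Lemma \ref{pl4}). A union bound then yields $\E[\max_{k\le N}|\mathcal{H}_n(h,t_k)|^J]^{1/J}\le N^{1/J}\cdot O(1)\lesssim h_n^{-(J+1)/J^2}$.

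Since $(J+1)/J^2\ge 1/J$, the grid term dominates, so combining the two pieces gives $\E[\sup_{t\in I_{n,h}}|\mathcal{H}_n(h,t)|^J]^{1/J}\lesssim h_n^{-(J+1)/J^2}$ with a constant independent of $h$. Summing over $h$ produces $\E[\sup_{1\le h\le d_n,\,t\in I_{n,h}}|\mathcal{H}_n(h,t)|^J]\lesssim d_n h_n^{-(J+1)/J}$, and Markov's inequality delivers the asserted rate $O_\p(d_n^{1/J}h_n^{-(J+1)/J^2})$. The delicate points are obtaining the single-point moment bound with a constant uniform in $h$ and $n$, and the choice $\tau_n=h_n^{1/J}$ of the discretisation scale — this is exactly what balances the grid cardinality $N^{1/J}$ against the oscillation scale and pins down the exponent $(J+1)/J^2$; the naive grid $\{1/n,\dots,1\}$ would give a substantially worse bound. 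The rest is bookkeeping.
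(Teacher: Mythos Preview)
Your proposal is correct and follows essentially the same argument as the paper: discretise $I_{n,h}$, control the grid maximum via a Yoshihara-type moment bound combined with the trivial union inequality $\E[\max_k|Z_k|^J]\le N\max_k\E[|Z_k|^J]$ (which is the paper's ``Lemma 2.2.2 from \cite{Wellner1996}''), control the oscillation via Lemma \ref{pl4}, and balance the mesh so that the grid term yields the exponent $(J+1)/J^2$. Your $\tau_n=h_n^{1/J}$ in $d$-units is exactly the paper's choice $\tau=h_n^{1+1/J}$ in $t$-units, and your lag-by-lag summation of $J$-th moments is the same device as the paper's joint union bound over $h$ and $t$.
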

\begin{proof}
   Let $Q_{n,h}$ be partitions of $I_{n,h}$ with width proportional to $\tau$. We then have 
\begin{align*}    
   \sup_{1 \leq h \leq d_n, t \in I_{n,h}}|\mathcal{H}_n(h,t)|
   \lesssim &  \sup_{1 \leq h \leq d_n, t \in Q_{n,h}}\Big|\mathcal{H}_n(h,t)\Big|+\sup_{|t-s|\leq \tau}\Big|\mathcal{H}_n(h,t)-\mathcal{H}_n(h,s)\Big| \\
    \lesssim& \sup_{1 \leq h \leq d_n, t \in Q_{n,h}}\Big|\mathcal{H}_n(h,t)\Big|+d_n^{1/J}O_\p\big (h_n^{-1/J}+\tau h_n^{-1-2/J}\big) 
\end{align*}
where we used Lemma \ref{pl4} for the second inequality in conjunction with Lemma 2.2.2 from \cite{Wellner1996}. Using Theorem 3 in \cite{Yoshi78} combined with Lemma 2.2.2 from \cite{Wellner1996} additionally gives
\begin{align}
    \sup_{1 \leq h \leq d_n, t \in Q_{n,h}}\Big|\mathcal{H}_n(t,s)\Big|= O_\p((d_n|Q_n|)^{1/J})=O_\p(d_n^{1/J}\tau^{-1/J})
\end{align}
which finishes the proof by choosing $\tau=h_n^{1+1/J}$.
\end{proof}

\begin{Lemma}
\label{pl6}
     We have that
    $$\mathcal{E}^+_0 \subset \hat{\mathcal{E}}^+_{\rho_n}~\text{  and
     } ~\hat{\mathcal{E}}^+_0 \subset \mathcal{E}^+_{\rho_n}
     $$ 
     with high probability for any sequence $\rho_n$ such that $\rho_n^{-1}=o\big ((nh_n)^{1/2}d_n^{-1/J}h_n^{\frac{J+1}{J^2}}\big )$. Similar inclusions are valid for the sets $\mathcal{E}^-_0$ and $\hat{\mathcal{E}}^-_0$. \\

     As an immediate consequence we have that $s(h,t)=\hat s_n(h,t)$ with high probability on the sets $\hat{\mathcal{E}}^+_{\rho_n},\hat{\mathcal{E}}^-_{\rho_n}, \mathcal{E}^+_{\rho_n},\mathcal{E}^-_{\rho_n}$ whenever it holds that $d_\infty>0$. 
\end{Lemma}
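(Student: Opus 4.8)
The plan is to derive both inclusions from a single uniform consistency bound for the bias-corrected estimator, namely
\[
\sup_{1 \leq h \leq d_n,\, t \in I_{n,h}} |\hat\gamma(h,t) - \gamma(h,t)| = o_\p(\rho_n).
\]
To obtain this I would first apply Theorem~\ref{t3}, which writes $\hat\gamma(h,t) - \gamma(h,t)$ as the leading stochastic term $(nh_n)^{-1/2}\mathcal{H}_n(h,t)$ plus a deterministic remainder of order $h_n^3 + (nh_n)^{-1} + d_n/n$, and then bound the leading term with Lemma~\ref{pl5}, which gives $\sup_{h,t}|\mathcal{H}_n(h,t)| = O_\p(d_n^{1/J}h_n^{-(J+1)/J^2})$. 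Together these yield $\sup_{h,t}|\hat\gamma - \gamma| = O_\p(r_n)$ with $r_n := (nh_n)^{-1/2}d_n^{1/J}h_n^{-(J+1)/J^2} + h_n^3 + (nh_n)^{-1} + d_n/n$. The hypothesis $\rho_n^{-1} = o\big((nh_n)^{1/2}d_n^{-1/J}h_n^{(J+1)/J^2}\big)$ is exactly the statement that the first term of $r_n$ is $o(\rho_n)$; that the three remaining terms are also $o(\rho_n)$ is the one place where the rate bookkeeping matters, and it follows from $h_n \sim n^{-\gamma_1}$ with $\gamma_1 > 1/7$ (which is precisely the condition making $h_n^3 \ll (nh_n)^{-1/2}$) together with $d_n \lesssim n^{-\gamma}\sqrt{nh_n^{-1}}$. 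Hence $r_n = o(\rho_n)$, and as an immediate consequence $\big|\hat d_{\infty,n} - d_{\infty,n}\big| \leq \sup_{h,t}\big| |\hat\gamma(h,t)| - |\gamma(h,t)| \big| \leq \sup_{h,t}|\hat\gamma - \gamma| = o_\p(\rho_n)$, because $\hat d_{\infty,n}$ and $d_{\infty,n}$ are the suprema of $|\hat\gamma|$ and $|\gamma|$ over the same index set.

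Next I would prove the inclusions on the event $\mathcal{A}_n := \{\, 2\sup_{h,t}|\hat\gamma - \gamma| \leq \rho_n \,\}$, which has probability tending to one by the previous step; on $\mathcal{A}_n$ one also has $\big|\hat d_{\infty,n} - d_{\infty,n}\big| \leq \rho_n/2$. If $(h,t) \in \mathcal{E}^+_0$ then $t$ lies on the $1/n$-grid, $1\leq h\leq d_n$, $t \in I_{n,h}$, and $\gamma(h,t) = d_{\infty,n}$, so on $\mathcal{A}_n$
\[
\hat\gamma(h,t) \;\geq\; \gamma(h,t) - \tfrac{\rho_n}{2} \;=\; d_{\infty,n} - \tfrac{\rho_n}{2} \;\geq\; \hat d_{\infty,n} - \rho_n,
\]
which is exactly the membership condition for $\hat{\mathcal{E}}^+_{\rho_n}$. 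The reverse inclusion $\hat{\mathcal{E}}^+_0 \subset \mathcal{E}^+_{\rho_n}$ is obtained by the same chain of inequalities with the roles of $\gamma$ and $\hat\gamma$ interchanged, and the analogous statements for $\mathcal{E}^-_0, \hat{\mathcal{E}}^-_0$ follow verbatim after replacing $\gamma,\hat\gamma$ by $-\gamma,-\hat\gamma$. (If one of the $\rho=0$ sets happens to be empty --- possible because $\hat d_{\infty,n}, d_{\infty,n}$ are continuum suprema while the extremal sets are discretized --- the corresponding inclusion is trivial, which causes no difficulty.)

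Finally, for the sign statement suppose $d_\infty > 0$. Pick $(h_0,t_0)$ with $|\gamma(h_0,t_0)| > d_\infty/2$; since $d_n \to \infty$, since $I_{n,h_0}$ increases to $[0,1]$, and since $\gamma_{h_0}$ is Lipschitz by (K2), for $n$ large there is a grid point $t \in I_{n,h_0}$ with $|\gamma(h_0,t)| > d_\infty/4 =: c$, hence $d_{\infty,n} \geq c$ and, by the first step, $\hat d_{\infty,n} \geq c - o_\p(1)$. Then on $\hat{\mathcal{E}}^+_{\rho_n}$ we have $\hat\gamma(h,t) \geq \hat d_{\infty,n} - \rho_n \geq c/2 > 0$ with high probability, so $\hat s_n(h,t) = 1$; and on $\mathcal{A}_n$, $\gamma(h,t) \geq \hat\gamma(h,t) - \rho_n/2 > 0$, so $s(h,t) = 1$, and the two signs agree. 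The set $\hat{\mathcal{E}}^-_{\rho_n}$ is handled identically with both signs equal to $-1$, and $\mathcal{E}^\pm_{\rho_n}$ by swapping $\gamma$ and $\hat\gamma$; these $\rho_n$-enlarged sets are moreover nonempty, since the grid-maximizer of $|\hat\gamma|$ lies within an $o_\p(\rho_n)$ modulus-of-continuity term of $\hat d_{\infty,n}$ by Lemma~\ref{pl4}. The only genuinely delicate step in all of this is verifying $r_n = o(\rho_n)$ under the simultaneous constraints on $h_n$, $d_n$ and $\rho_n$; the rest is elementary manipulation of inequalities on the event $\mathcal{A}_n$.
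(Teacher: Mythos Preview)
Your proof is correct and follows exactly the route the paper indicates: the paper's own proof is the one-liner ``Follows immediately from Theorems~\ref{t3}, Lemma~\ref{pl5} and the definition of $\hat\gamma(h,t)$ and $\gamma(h,t)$,'' and you have simply spelled out the implied argument (uniform bound $\sup_{h,t}|\hat\gamma-\gamma|=o_\p(\rho_n)$ from the linearization plus Lemma~\ref{pl5}, then elementary inequalities on the high-probability event). Your explicit rate bookkeeping for the deterministic remainders and your importing of the condition $d_n\lesssim n^{-\gamma}\sqrt{n/h_n}$ from the ambient Theorem~\ref{t4} are appropriate, and the nonemptiness remark at the end, while slightly loosely argued via Lemma~\ref{pl4}, is inessential since the lemma's assertions are vacuous on empty sets.
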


\begin{proof}
   Follows immediately from Theorems \ref{t3}, Lemma \ref{pl5} and the definition of $\hat \gamma(h,t)$ and $\gamma(h,t)$. 
\end{proof}

\subsection{Proof of Theorem \ref{t5}}
In this section we denote by $\p^*$ the conditional distribution given $X_1,...,X_n$. In many cases auxiliary Theorems and Lemmas will give results involving distributions $\tilde \p^*$ which are given as the conditional distribution given $\{\epsilon_{jh}\}$. In all of these cases it is easy to check that the probabilities in question do not change when swapping from  $\tilde \p^*$ to $\p^*$ and we will use this repeatedly without further notice. We denote by $x_0>0$ some arbitrary but fixed number. 

We define 
\begin{align}
    T_n(\rho_n)=\sup_{(h,t) \in \mathcal{E}_{\rho_n/2}}\Big(\frac{s(h,t)}{\sqrt{nh_n}}\sum_{j=1}^{n-h}\epsilon_{jh}K^*_{h_n}(j/n-t)\Big)
\end{align}
and write $T_n=T_n(\rho_n)$ if the sequence $\rho_n$ is clear from the context.

\begin{proof}

\textbf{Proof of \eqref{p44}}
    We focus on the case $d_\infty>0$, the arguments for $d_\infty=0$ are similar but easier.    

By \ref{t4} we have that
\begin{align}
\label{p40}
    &\sqrt{nh_n}(\hat d_{\infty,n}-d_{\infty,n})\\ \leq &\sup_{(h,t) \in \mathcal{E}_{\rho_n/2}}\Big(\frac{s(h,t)}{\sqrt{nh_n}}\sum_{j=1}^{n-h}\epsilon_{jh}K^*_{h_n}(j/n-t)\Big)+ o_\p(n^{-\gamma})\\
    =& T_n(\rho_n/2) + o_\p(n^{-\gamma}) ~.
\end{align}

By Lemmas \ref{pl8} and \ref{pl9} we have for any $x_0>0$ that 
\begin{align}
\label{p41}
       & \sup_{ y\geq x_0}\Big| \p\Big(T_n(\rho_n/2) \geq y \Big) 
       - \p\Big( \sup_{(h,t) \in \mathcal{E}_{\rho_n/2}}n^{-1/2}\hat s_n(h,t)\sum_{i=1}^{n-(l_n \lor h)+1} Y_{jh}(t)  >y\Big|\mathcal{Y}\Big)\Big|      =o(1)~.
\end{align}

Further we know by the definitions of $\mathcal{E}_{\rho}$ and $\hat{\mathcal{E}}_{\rho}$ and by  Lemma \ref{pl5} that with high probability it holds that $\mathcal{E}_{\rho_n/2}\subset \hat{\mathcal{E}}_{\rho_n}$ which implies that
\begin{align}
\label{p42}
    & \sup_{(h,t) \in \mathcal{E}_{\rho_n/2}}n^{-1/2}\hat s_n(h,t)\sum_{i=1}^{n-(l_n \lor h)+1} Y_{jh}(t)  \\
    \leq& \sup_{(h,t) \in \hat{\mathcal{E}}_{\rho_n}}n^{-1/2}\hat s_n(h,t)\sum_{i=1}^{n-(l_n \lor h)+1} Y_{jh}(t)   
\end{align}
holds with high probability. Denote by $\hat q^*_{1-\alpha}$ and $q^*_{1-\alpha}$ the $(1-\alpha)$-quantiles of 
\begin{align}
     \sup_{(h,t) \in \hat{\mathcal{E}}_{\rho_n}}n^{-1/2}\hat s_n(h,t)\sum_{i=1}^{n-(l_n \lor h)+1} Y_{jh}(t) 
\end{align} and
\begin{align}
   \sup_{(h,t) \in \mathcal{E}_{\rho_n/2}}n^{-1/2}\hat s_n(h,t)\sum_{i=1}^{n-(l_n \lor h)+1} Y_{jh}(t)  ~,
\end{align} 
respectively. By \eqref{p42} we clearly have $q^*_{1-\alpha}\leq \hat q^*_{1-\alpha}$ with high probability.\\

Now combining the above observations we obtain with high probability that 
\begin{align}
    &\p( \hat T_{n,\Delta} > \hat q^*_{1-\alpha})\\
    \leq &\p(  \sqrt{nh_n}(\hat d_{\infty,n}-d_{\infty,n}) > \hat q^*_{1-\alpha}) \\
    \leq& \p(  \sqrt{nh_n}(\hat d_{\infty,n}-d_{\infty,n}) >  q^*_{1-\alpha})+o(1)\\
    \leq& \p\Big(T_n(\rho_n/2)+o_\p(n^{-\gamma})>q^*_{1-\alpha}\Big)+o(1)\\
    =& \p\Big(\sup_{(h,t) \in \mathcal{E}_{\rho_n/2}}n^{-1/2}\hat s_n(h,t)\sum_{i=1}^{n-(l_n \lor h)+1} Y_{jh}(t)>q^*_{1-\alpha}-n^{-\gamma}\Big|\mathcal{Y}\Big)\\
    &~~~~~~~~~+o(1)\\    
    =&\alpha+o(1)~,
\end{align}
where the third inequality follows by \eqref{p40}, the forth follows by \eqref{p41}
and the fifth by Lemma \ref{pl1}.

\textbf{Proof of \eqref{p45}}
Note that
\begin{align}
\label{p48}
    \hat T_{n,\Delta}=\sqrt{nh_n}(\hat d_{\infty,n}-d_{\infty,n})+\sqrt{nh_n}(d_{\infty,n}-\Delta)~.
\end{align}

By Theorem \ref{t4} and Lemma \ref{pl5} we have 
\begin{align}
\label{p46}
    \sqrt{nh_n}(\hat d_{\infty,n}-d_{\infty,n}) \lesssim O_\p\Big(d_n^{1/J}h_n^{-\frac{J+1}{J^2}}\Big)~.
\end{align}
Further we observe by the uniform continuity of $\gamma_h$ that for $n$ sufficiently large we have
\begin{align}
    d_\infty-d_{\infty,n}<(d_\infty-\Delta)/2
\end{align}
so that for $n$ large enough
\begin{align}
\label{p47}
    \sqrt{nh_n}(d_{\infty,n}-\Delta) \geq \sqrt{nh_n}(d_\infty-\Delta)/2~.
\end{align}
Combining \eqref{p48}, \eqref{p46} and \eqref{p47} gives
\begin{align}
     \hat T_{n,\Delta} \geq  O_\p\Big(d_n^{1/J}h_n^{-\frac{J+1}{J^2}}\Big)+ \sqrt{nh_n}(d_\infty-\Delta)/2
\end{align}
which yields the desired result.
\end{proof}

\begin{Lemma}
\label{pl8}
Define 
\begin{align}
    S_n^*=\sup_{(h,t) \in \mathcal{E}_{\rho_n}}n^{-1/2}\hat s_n(h,t)\sum_{i=1}^{n-(l_n \lor h)+1}\tilde Y_{jh}(t)
\end{align}
where $\tilde Y_{jh}(t)=l_n^{-1/2}\nu_i\sum_{k=0}^{l_n-1} \epsilon_{(j+k)h}K^*_{h_n}(j/n-t)$. Then we have that
\begin{align}
    \sup_{t>x_0}\Big| \p(T_n>t)-\p^*(S_n^*>t)\Big|\lesssim o(1)
\end{align}
on a set with probability $1-o(1)$.
\end{Lemma}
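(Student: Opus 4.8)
The statement is an instance of the general validity of the block multiplier bootstrap for maxima of partial sums of a high-dimensional $\beta$-mixing array (Theorem \ref{pt2}); the plan is to cast $T_n$ and $S_n^*$ into that framework and verify its hypotheses. Write $T_n$ and $S_n^*$ as maxima over a common deterministic index set of cardinality $p_n \lesssim d_n n$ (indexing $(h,t)\in\mathcal{E}_{\rho_n}$ together with the sign $s(h,t)$): $T_n$ is the max of the suitably normalized partial sums $\tfrac{1}{\sqrt{nh_n}}\sum_{j} s(h,t)\,\epsilon_{jh}K^*_{h_n}(j/n-t)$ of the $\beta$-mixing array $Z_j=(s(h,t)\,\epsilon_{jh}K^*_{h_n}(j/n-t))_{(h,t)}$, and $S_n^*$ is the max of the overlapping-block multiplier analog of the same array driven by the iid standard normals $(\nu_j)$. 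Each coordinate of $Z_j$ is kernel-localized, so only $\asymp nh_n$ of the $n$ summands per coordinate are nonzero and the effective sample size is $nh_n$. First I would use Lemma \ref{pl6} to replace $\hat s_n(h,t)$ by $s(h,t)$ throughout $S_n^*$ — they coincide on all of $\mathcal{E}_{\rho_n}$ on a $1-o(1)$ event — after which, conditionally on $\{\epsilon_{jh}\}$, $S_n^*$ is exactly the maximum of a centered Gaussian vector whose covariance $\hat\Sigma$ is the empirical blockwise long-run covariance of $(Z_j)_j$.

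The argument then proceeds by a three-step comparison. (i) Apply the Gaussian approximation part of Theorem \ref{pt2} to $T_n$: it produces a centered Gaussian vector $G$ with covariance $\Sigma$ equal to the blockwise long-run covariance of $(Z_j)_j$ such that $\sup_{t\in\R}|\p(T_n>t)-\p(\max_k G_k>t)|=o(1)$. (ii) Since $S_n^*$ is conditionally Gaussian with covariance $\hat\Sigma$, its conditional law is that of $\max_k \tilde G_k$ with $\tilde G\sim N(0,\hat\Sigma)$. (iii) Compare $G$ and $\tilde G$ by the standard Gaussian comparison inequality (in the spirit of \cite{Chernozhukov2018}): $\sup_t|\p(\max_k G_k>t)-\p(\max_k\tilde G_k>t)|\lesssim \|\hat\Sigma-\Sigma\|_\infty^{1/3}(\log p_n)^{2/3}$, which is $o_\p(1)$ once one has the covariance consistency $\|\hat\Sigma-\Sigma\|_\infty=o_\p((\log p_n)^{-2})$. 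A triangle inequality over (i)--(iii) yields $\sup_{t\ge x_0}|\p(T_n>t)-\p^*(S_n^*>t)|=o(1)$ on a $1-o(1)$ event; the restriction $t\ge x_0$ is harmless since this is the only range used in the proof of \eqref{p44} and the comparison and anti-concentration bounds are effective there.

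Verifying the hypotheses is where the assumptions enter. The moment bound (M) (with $J\ge 8$) and the mixing decay (D) supply the Lyapunov-type moment estimates for block sums and control the coupling error $\beta(\cdot)$ in the big-block/small-block decomposition underlying Theorem \ref{pt2}; the kernel assumptions (K1) control the weights $K^*_{h_n}$ and the bandwidth rate; and the growth restriction $d_n\lesssim n^{-\gamma}\sqrt{n/h_n}$ keeps $\log p_n$ small relative to the effective sample size $nh_n$ — these together give step (i). The non-degeneracy assumption (V) guarantees that the blockwise variances of the coordinates indexed by $\mathcal{E}_{\rho_n}$ stay bounded away from $0$ uniformly, which is what makes both the Gaussian approximation rate in (i) and the anti-concentration inside the Gaussian comparison in (iii) usable. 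Finally the covariance consistency needed in (iii) is a uniform (over the $\asymp p_n^2$ entries) law of large numbers for the products of block sums, proved via a maximal inequality using (M), (D); this is where the block-length window $n^{0.325}\lesssim l_n\lesssim n^{1/3}$ is used — the lower bound so that a block of length $l_n$ captures the long-run covariance, the upper bound so that the $\asymp n/l_n$ effectively independent blocks suffice for the LLN.

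The main obstacle I expect is precisely this covariance-consistency and non-degeneracy bookkeeping in the non-stationary, kernel-localized setting: $\Sigma$ is itself a local object, the effective sample size varies by a factor of $h_n$ across the problem, and one must control $\hat\Sigma-\Sigma$ uniformly over $\asymp (d_n n)^2$ pairs while at the same time keeping the variances bounded below along $\mathcal{E}_{\rho_n}$ — which, in the genuinely locally stationary regime, is a set of $t$'s that can be sparse. Making the Gaussian approximation error, the Gaussian comparison error and the covariance error all $o(1)$ simultaneously is what forces the exact interplay of the three scales $nh_n$, $d_n$, $l_n$ in the rate conditions; everything else is routine once Theorem \ref{pt2} and the classical Gaussian comparison lemmas are in hand.
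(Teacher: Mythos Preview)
Your approach is essentially the paper's: cast $T_n$ and $S_n^*$ as maxima over a high-dimensional $\beta$-mixing array indexed by $\mathcal{E}_{\rho_n}$, invoke the general block-multiplier bootstrap validity theorem after verifying its hypotheses from (M), (D), (K1), (V) and the constraint $n^{0.325}\lesssim l_n\lesssim n^{1/3}$, and use Lemma \ref{pl6} to swap $\hat s_n$ for $s$ on a high-probability event. One small point: since the $\epsilon_{jh}$ are unbounded (only $J$ moments under (M)), you should invoke Theorem \ref{boot} rather than Theorem \ref{pt2} directly --- the former performs the truncation at level $u\asymp (np)^{1/J}\log n$ that is needed before the bounded-data machinery of Theorem \ref{pt2} (and your three-step Gaussian-approximation / Gaussian-comparison argument) applies.
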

\begin{proof}
    We want to apply Theorem \ref{boot}  (for the variables $\{\epsilon_{jh}K^*_{h_n}(j/n-t)h_n^{-1/2}\}$) for which we need to verify its requirements. D1) and D2) as well as $\beta(n)\lesssim n^{-3}$ follow immediately from our assumptions. We are left with verifying $D3)$ for $p \lesssim n^{1.6}$ and with verifying the conditions on $\bar \sigma(q)$ and $\bar \sigma(r)$. Regarding $D3)$ we note that using that $\epsilon_{jh}$ has slightly more than 8 moments we obtain that $e_1$ slightly larger than $1/6$ is a valid choice as $(np)^{1/ 8}\lesssim n^{0.325-\delta}$ for some $\delta>0$. Consequently we can always choose some $e_2>0$ as $\beta(k)\lesssim k^{-8}$ by (D). It follows that any sequence $l_n$ satisfying
    \begin{align}
        n^{0.325}\lesssim l_n \lesssim n^{1/3}
    \end{align}
    is permissible for applying Theorem \ref{boot}.
    The conditions on $\bar \sigma(q)$ and $\bar \sigma(r)$ follow from assumption $(V)$. This yields the desired statement with  
    \begin{align}
        \sup_{(h,t) \in \mathcal{E}_{\rho_n}}n^{-1/2}s(h,t)\sum_{i=1}^{n-(l_n \lor h)+1}\tilde Y_{jh}(t)
    \end{align}
    in place of $S_n^*$.  The desired assertion is then obtained by noting that by Lemma \ref{pl6} we may replace $s(h,t)$ by its estimator without changing the above quantity on a set of high probability.\\

\end{proof}

In the next Lemma we swap out $\epsilon_{jh}$ for their estimators. 
\begin{Lemma}
    \label{pl9} 
Let
\begin{align}
    R_n^*=\sup_{(h,t) \in \mathcal{E}_{\rho_n}}n^{-1/2}\hat s_n(h,t)\sum_{i=1}^{n-(l_n \lor h)+1} Y_{jh}(t)~,
\end{align} 
    then, provided that $lh_n^{-1}\lesssim n^{0.6}$ we have for any sequence $\rho_n$ that 
    \begin{align}
        \sup_{t>x_0}\Big| \p(R^*_n>t)-\p^*(S_n^*>t)\Big|\lesssim o(1)
    \end{align}
    holds on a set of probability $1-o(1)$.
\end{Lemma}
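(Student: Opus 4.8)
The plan is to couple $S_n^*$ and $R_n^*$ on a single sequence of multipliers $\nu_j$ and to argue conditionally on $X_1,\dots,X_n$, on the data event of probability $1-o(1)$ on which (a) $\hat s_n(h,t)=s(h,t)$ for all $(h,t)\in\mathcal{E}_{\rho_n}$, which holds by Lemma \ref{pl6}, and (b) $\sup_{1\le h\le d_n,\,t}|\hat\gamma_h(t)-\gamma_h(t)|\le\bar\eta_n$ for the deterministic rate $\bar\eta_n\simeq (nh_n)^{-1/2}d_n^{1/J}h_n^{-(J+1)/J^2}+h_n^3+(nh_n)^{-1}+d_n/n$ furnished by Theorem \ref{t3} together with Lemma \ref{pl5}. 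On this event both $S_n^*$ and $R_n^*$, viewed as functions of the $\nu_j$, are maxima over the finite set $\mathcal{E}_{\rho_n}$ (of cardinality at most $nd_n$) of centered Gaussian variables that differ only through $\epsilon_{jh}$ versus $\hat\epsilon_{jh}=X_{j+h}X_j-\hat\gamma_h(j/n)$. Since $\epsilon_{jh}-\hat\epsilon_{jh}=\hat\gamma_h(j/n)-\gamma_h(j/n)$ and the weight $K^*_{h_n}(i/n-t)$ is the same for every $k$ in a block, the common-multiplier coupling gives $|S_n^*-R_n^*|\le D_n$, where
\begin{align}
D_n:=\sup_{(h,t)\in\mathcal{E}_{\rho_n}}(nl_n)^{-1/2}\Big|\sum_i\nu_i\,K^*_{h_n}(i/n-t)\sum_{k=0}^{l_n-1}\big(\hat\gamma_h((i+k)/n)-\gamma_h((i+k)/n)\big)\Big|.
\end{align}

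The next step is to control $\E^*[D_n]$, with $\E^*$ the conditional expectation given the data. For fixed $(h,t)$ only the $O(nh_n)$ indices $i$ with $|i/n-t|\le h_n$ (the support of $K^*_{h_n}$) contribute, and there $|K^*_{h_n}(i/n-t)|\le\|K^*\|_\infty$ while $\big|\sum_{k=0}^{l_n-1}(\hat\gamma_h-\gamma_h)((i+k)/n)\big|\le l_n\bar\eta_n$; hence, conditionally on the data, each such Gaussian variable has variance at most $(nl_n)^{-1}\cdot O(nh_n)\cdot(l_n\bar\eta_n\|K^*\|_\infty)^2\simeq h_nl_n\bar\eta_n^2$. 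The elementary maximal inequality for finitely many (possibly dependent) centered Gaussians then yields
\begin{align}
\E^*[D_n]\lesssim\sqrt{h_nl_n}\,\bar\eta_n\,\sqrt{\log(nd_n)}=:\mu_n.
\end{align}
Substituting $\bar\eta_n$ and invoking (K1) together with the rate restrictions in force (in particular $l_nh_n^{-1}\lesssim n^{0.6}$ and the cap $d_n\lesssim n^{-\gamma}\sqrt{nh_n^{-1}}$) and $J\ge 8$, I would check that $\mu_n\sqrt{\log(nd_n)}=o(1)$, the binding contribution being $\sqrt{l_n/n}\,d_n^{1/J}h_n^{-(J+1)/J^2}$, which is polynomially small in $n$.

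Now fix a deterministic $\xi_n$ with $\mu_n\ll\xi_n\ll(\log(nd_n))^{-1/2}$, possible because $\mu_n\sqrt{\log(nd_n)}=o(1)$. Markov's inequality gives $\p^*(D_n>\xi_n)\le\mu_n/\xi_n=o(1)$. The inclusions $\{R_n^*>t\}\subseteq\{S_n^*>t-\xi_n\}\cup\{D_n>\xi_n\}$ and $\{S_n^*>t+\xi_n\}\subseteq\{R_n^*>t\}\cup\{D_n>\xi_n\}$ then yield, uniformly in $t>x_0$,
\begin{align}
\big|\p^*(R_n^*>t)-\p^*(S_n^*>t)\big|\le\sup_{u\in\R}\p^*\big(u<S_n^*\le u+2\xi_n\big)+o(1),
\end{align}
and it remains to bound the anti-concentration term. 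Since $S_n^*$ is the maximum of at most $nd_n$ centered Gaussian variables whose marginal variances are bounded above by a constant and, by assumption (V) (which is in force throughout the proof of Theorem \ref{t5}, exactly as used for Lemma \ref{pl8}), bounded below by a positive constant on $\mathcal{E}_{\rho_n}$, the Chernozhukov--Chetverikov--Kato anti-concentration bound gives $\sup_u\p^*(u<S_n^*\le u+2\xi_n)\lesssim\xi_n\sqrt{\log(nd_n)}=o(1)$. This proves the claim on the conditioning event, which has probability $1-o(1)$. (One may avoid invoking (V) here by instead bounding the maximal discrepancy between the conditional covariance functions of $S_n^*$ and $R_n^*$ and applying the Gaussian comparison bound of the same authors, at the cost of controlling $\max_{i,h}|\sum_{k=0}^{l_n-1}\epsilon_{(i+k)h}|$.)

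I expect the main obstacle to be the variance bookkeeping in the second step: one must see that $K^*_{h_n}$ enters only through an $O(1)$ bound on an $O(nh_n)$-sized index set --- not through an $O(h_n^{-1})$ factor --- and that the $l_n^{-1/2}$ in the block construction compensates the $l_n$-fold inner sum only up to a residual factor $\sqrt{l_n}$; one then has to verify that $\sqrt{h_nl_n}\,\bar\eta_n$ times the union-bound logarithm is $o(1)$, which is precisely where the hypothesis $l_nh_n^{-1}\lesssim n^{0.6}$ is used. The anti-concentration step (or its Gaussian-comparison substitute) is routine once the non-degeneracy of the marginals of $S_n^*$ on $\mathcal{E}_{\rho_n}$ is in hand.
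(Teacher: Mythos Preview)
Your proposal is correct and follows the paper's approach: the paper bounds $|\epsilon_{jh}-\hat\epsilon_{jh}|=|\hat\gamma_h(j/n)-\gamma_h(j/n)|\lesssim o_\p(n^{-(3/10+\delta)})$ via Lemma \ref{pl5} and Theorem \ref{t3}, and then defers to Lemma~5.8 of \cite{Bastian2024}, which carries out precisely the common-multiplier coupling plus Gaussian anti-concentration/comparison argument you spell out. The one minor point is that the anti-concentration step requires a lower bound on the \emph{conditional} marginal variances of $S_n^*$, not just the population quantity in (V); this is available on the high-probability event already produced in Lemma \ref{pl8} through the $\|\Sigma-\hat\Sigma\|_\infty$ control of Lemma \ref{pl2}, or one uses the Gaussian-comparison variant you mention in parentheses.
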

\begin{proof}
   Using Lemma \ref{pl5} we obtain that
    \begin{align}
        \sup_{1 \leq h \leq d_n, j/n \in I_{n,h}}|\epsilon_{jh}-\hat \epsilon_{jh}| \lesssim o_\p(n^{-(3/10+\delta)})
    \end{align}
    holds on a set with probability $1-o(1)$ for some small $\delta>0$. From here on one can proceed as in the proof of Lemma 5.8 in \cite{Bastian2024} to obtain the desired result (the interested reader may use Lemma \ref{pl1} in place of verifying the proposed modification of Theorem 1 from \cite{chernozhukov2017} used in the proof of Lemma 5.8 in the cited reference).    
\end{proof}

\section{Gaussian Approximation and Block-Bootstrap for High Dimensional $\beta$-mixing Data}

\subsection{Bounded Data}

The proofs in this section will rely on the big blocks small blocks technique pioneered by Bernstein. To that end let $q=q_n>r=r_n$ be two positive integers with $2(q+r)<n$ and define
\begin{align}
    &I_1=\{1,...,q\},\\
    &J_1=\{q+1,...,q+r\},\\
    &\quad \quad \quad \vdots \\
    &I_m=\{(m-1)(q+r)+1,...,(m-1)(q+r)+q\},\\
    &J_m=\{(m-1)(q+r)+q+1,...,m(q+r)\},\\
    &J_{m+1} = \{m(q+r),...,n\}
\end{align} where $m=m_n=\floor{n/(q+r)}$. In other words we decompose the sequence $\{1,...,n\}$ into large blocks $I_1,...,I_m$ of length $q$ separated by small blocks $J_1,...,J_{m}$ of length $r$ and a remaining part $J_{m+1}$.

In this section we will consider a mean zero sample $X_1,...,X_n \in \R^p$ that we assume to be $\beta$-mixing. Further let $T_n=\max_{1 \leq j \leq p} n^{-1/2}\sum_{i=1}^nX_{ij}$. Assume that there exists some constant $D_n$ such that 
\begin{align} 
\label{p400}
    |X_{ij}|\leq D_n \quad \text{ a.s. } 1 \leq i \leq n, 1 \leq j \leq p
\end{align}
Further let 
\begin{align}
    B_l=\sum_{ i \in I_l} X_i, \quad S_l= \sum_{ i \in J_l} X_i
\end{align}
and $\{ \tilde B_l , 1 \leq l \leq m\}$, $\{ \tilde S_l, 1\leq l \leq m\}$ be sequences of independent copies of $B_l$ and $S_l$. Additionally let $Y=(Y_1,...,Y_p)$ be a centered normal vector with covariance matrix $\E[YY^T]=(mq)^{-1}\sum_{i=1}^m\E[B_iB_i^T]$. \\

We also define
\begin{align}
    \bar \sigma(q):=\max_{1 \leq j \leq p }\frac{1}{m}\sum_{I \in \{I_1,...,I_m\}}\text{Var}\left(q^{-1/2}\sum_{i \in I}X_{ij}\right) \\
    \bar \sigma(r):=\max_{1 \leq j \leq p }\frac{1}{m}\sum_{J \in \{J_1,...,J_m\}}\text{Var}\left(q^{-1/2}\sum_{i \in J}X_{ij}\right) 
\end{align}
Additionally let $\bar \sigma(0):=1$.\\
Finally we denote for a sigma Algebra $\mathcal{X}$ by $\P^*(\cdot)$ the measure $\P(\cdot| \mathcal{X})$, in the following $\mathcal{X}$ will always be the sigma algebra generated by either  $X_1,...,X_n$ or their truncated versions and the choice will always be clear from the context and hence will not be reflected in the notation.\\
We begin with establishing a Gaussian approximation result for $\beta$-mixing random variables that allows for decaying variances in all but one coordinate.

\begin{theorem}
\label{pt1}
   Suppose that 
   \begin{align}
       \bar \sigma(q) &\gtrsim 1 \\
       \bar \sigma(q) \lor \bar \sigma(r) &\lesssim 1\\
       (q+r\log(pn))\log(pn)^{1/2}D_nn^{-1/2}&\lesssim n^{-c_1}\\
       r/q \log(p)^2 & \lesssim n^{-c_1}\\
       q^{1/2}D_n\log(pn)^5n^{-1/2}&\lesssim n^{-c2}       
   \end{align}
   then
   \begin{align}
     \sup_{t >x_0}\left|\p(T_n\leq t)-\p\Big(\max_{1 \leq j \leq p}Y_j\leq t\Big)\right|\lesssim n^{-c}+(m-1)\beta(r)
\end{align}
for some $c>0$ only depending on $c_1,c_2$. In particular we also have that
\begin{align}
     \sup_{t >x_0}\left|\p(T_n\leq t+\epsilon)-\p(T_n\leq t)\right|\lesssim n^{-c}+(m-1)\beta(r)
\end{align}
\end{theorem}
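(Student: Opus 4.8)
The plan is to reduce the dependent maximum $T_n$ to the maximum of a sum of \emph{independent} block sums via the big blocks / small blocks device, then to invoke a high-dimensional central limit theorem for independent bounded summands together with Gaussian comparison and anti-concentration; throughout, the restriction to $t>x_0$ and the lower bound $\bar\sigma(q)\gtrsim 1$ are used to neutralise the possibility that the coordinate variances decay to $0$. Write
\begin{align*}
\sqrt{n}\,T_n=\max_{1\le j\le p}\Bigl(\sum_{l=1}^m B_{lj}+\sum_{l=1}^{m}S_{lj}+\!\!\sum_{i\in J_{m+1}}\!\!X_{ij}\Bigr).
\end{align*}
First I would apply Berbee's coupling lemma to $B_1,\dots,B_m$, which inherit $\beta$-mixing from $\{X_i\}$: since consecutive big blocks are separated by a small block of length $r$, one can construct on a possibly enlarged space independent copies $\tilde B_1,\dots,\tilde B_m$ of the $B_l$ with $\p(\exists\, l:\ B_l\neq\tilde B_l)\le (m-1)\beta(r)$ --- this is the origin of the $(m-1)\beta(r)$ term. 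On the coupling event we replace $\sum_l B_l$ by $\sum_l\tilde B_l$.

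Second, I would show that, up to an additive error of order $n^{-c}$ in Kolmogorov distance, the small-block sum $\sum_l S_l$, the remainder $\sum_{i\in J_{m+1}}X_i$, and the mismatch between the normalisations $n^{-1/2}$ and $(mq)^{-1/2}$ are all negligible, so that it suffices to approximate the law of $\max_j (mq)^{-1/2}\sum_{l=1}^m\tilde B_{lj}$. For the small-block and remainder parts one combines a Bernstein-type inequality for $\beta$-mixing sequences with the a.s.\ bound $|X_{ij}|\le D_n$, the assumption $\bar\sigma(q)\vee\bar\sigma(r)\lesssim 1$ (which forces $\bar\sigma(r)\lesssim r/q$), and the fact that $r/q\,\log(p)^2\lesssim n^{-c_1}$ makes the combined small-block length $mr\simeq nr/(q+r)$ a vanishing fraction of $n$; the condition $(q+r\log(pn))\log(pn)^{1/2}D_nn^{-1/2}\lesssim n^{-c_1}$ makes the union bound over the $p$ coordinates affordable, while the normalisation mismatch contributes at most $\simeq (r/q)\sqrt{\log p}\lesssim n^{-c_1}$. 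Transferring these small errors to the level of distribution functions requires a Gaussian anti-concentration estimate for $\max_jY_j$; this is the issue confronted in the last step.

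Third, the block sums $\tilde B_1,\dots,\tilde B_m$ are independent, centered, and satisfy $|\tilde B_{lj}|\le qD_n$, so a high-dimensional central limit theorem for independent summands (in the spirit of \cite{Chernozhukov2018}) gives that $\max_j(mq)^{-1/2}\sum_l\tilde B_{lj}$ is within $n^{-c}$ in Kolmogorov distance of $\max_jY_j$, with $Y$ the centered Gaussian vector having covariance $(mq)^{-1}\sum_l\E[B_lB_l^T]$ as in the statement. Verifying the hypotheses of that theorem is where the remaining rate conditions enter: a Rosenthal-type inequality for $\beta$-mixing sequences controls the centered third moments of the length-$q$ blocks, $qD_n$ bounds the envelope $\max_{l,j}|\tilde B_{lj}|$, and then $q^{1/2}D_n\log(pn)^5n^{-1/2}\lesssim n^{-c_2}$, together with $\bar\sigma(q)\gtrsim 1$ (which keeps the pertinent variances bounded below), makes the CLT error polynomially small. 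Together with the first two steps this proves the first assertion. For the ``in particular'' claim one writes, for $t>x_0$,
\begin{align*}
\bigl|\p(T_n\le t+\epsilon)-\p(T_n\le t)\bigr|
&\le\bigl|\p(T_n\le t+\epsilon)-\p(\textstyle\max_jY_j\le t+\epsilon)\bigr|\\
&\quad+\p\bigl(t<\textstyle\max_jY_j\le t+\epsilon\bigr)+\bigl|\p(\textstyle\max_jY_j\le t)-\p(T_n\le t)\bigr|,
\end{align*}
and bounds the outer terms by the first assertion and the middle one by Gaussian anti-concentration.

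The main obstacle --- and the point of departure from the classical argument --- is exactly this anti-concentration step, which also silently underlies the sandwiching in the second step: since $Y$ may have many coordinates with variance tending to $0$, Nazarov's inequality cannot be applied globally. The remedy again uses $\bar\sigma(q)\gtrsim 1$ and $t>x_0$: fix $\sigma_0^2\simeq x_0^2/\log p$, set $J_0=\{j:\ \text{Var}(Y_j)>\sigma_0^2\}$, note that $J_0\neq\emptyset$ by $\bar\sigma(q)\gtrsim 1$, and bound $\p(\exists\, j\notin J_0:\ Y_j>x_0)\le 2p\exp(-x_0^2/(2\sigma_0^2))\lesssim n^{-c}$, with the analogous estimate for $\{\exists\, j\notin J_0:\ n^{-1/2}\sum_iX_{ij}>x_0\}$ following from a Bernstein inequality for $\beta$-mixing sequences. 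Hence for $t>x_0$ both maxima may, up to events of probability $\lesssim n^{-c}$, be replaced by maxima over $J_0$, on which $\min_{j\in J_0}\text{Var}(Y_j)>\sigma_0^2$, so that Nazarov's inequality yields $\p(t<\max_{j\in J_0}Y_j\le t+\delta)\lesssim\delta\log p$; since $\log p\lesssim\log n$ in the relevant regime, such errors remain polynomially small. Carrying this localisation consistently through the coupling, the block reduction and the CLT --- so that all truncation and mixing errors stay of polynomial order and the bound collapses to $n^{-c}+(m-1)\beta(r)$ --- is the technically delicate part of the proof.
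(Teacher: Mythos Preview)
Your blueprint matches the paper's: big/small block decomposition, a $\beta$-mixing decoupling step (the paper cites Corollary~2.7 of \cite{Yu1994} rather than invoking Berbee directly, but to the same effect), a high-dimensional CLT for the independent big-block sums, and anti-concentration to absorb the small shifts. The one substantive difference is how the possibly degenerate coordinate variances are handled. You propose an explicit coordinate-splitting argument (discard $j\notin J_0$, show those coordinates never exceed $x_0$ on either side, then apply Nazarov on $J_0$); the paper instead outsources this entirely to the machinery of \cite{Chetverikov2020}, whose Theorem~4.1 (CLT) and Lemma~C.3 (anti-concentration) require only the \emph{maximum} coordinate variance to be bounded below---precisely what $\bar\sigma(q)\gtrsim 1$ supplies. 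The paper simply records ``for at least one $j$ we have $\bar\sigma(q)\simeq n^{-1}\sum_l\mathrm{Var}(\tilde B_{lj})$'' and applies these results as black boxes (Lemma~\ref{pl1} packages the anti-concentration part). Your route is more self-contained but costs an extra step: trimming low-variance coordinates on the \emph{data} side via a $\beta$-mixing Bernstein inequality presupposes that $\mathrm{Var}(n^{-1/2}\sum_iX_{ij})$ is comparable to $\mathrm{Var}(Y_j)$, whereas the paper does all the trimming after passing to Gaussians and so avoids this. Two minor remarks: for the small-block contribution the paper also decouples first (obtaining independent $\tilde S_l$) and then applies an expectation bound for maxima of independent bounded sums plus Markov, rather than a Bernstein inequality for mixing sequences; and your parenthetical ``which forces $\bar\sigma(r)\lesssim r/q$'' does not follow from $\bar\sigma(q)\vee\bar\sigma(r)\lesssim 1$ as stated---the small-block bound in the paper combines $\bar\sigma(r)\lesssim 1$ with the separate hypothesis $r/q\,\log(p)^2\lesssim n^{-c_1}$.
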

\begin{proof}
For the remainder of this proof $c$ denotes a generic positive constant that may change from line to line that only depends on $c_1,c_2$ as well as $d_1,d_2$.  First we bound the error incurred by leaving out the small blocks, i.e. we have that
    \begin{align}
        |T_n-\max_{1 \leq j \leq p}n^{-1/2}\sum_{i=1}^mB_{ij}|\leq \max_{1 \leq j \leq p}\Big|n^{-1/2}\sum_{i=1}^mS_{ij}\Big| + \max_{1 \leq j \leq p}\Big|n^{-1/2}S_{(m+1)j}\Big|
    \end{align}
and note that
\begin{align}  
\label{b1}
    \max_{1 \leq j \leq p}\Big|n^{-1/2}S_{(m+1)j}\Big|&\lesssim  qD_n/\sqrt{n}     
\end{align}

By Corollary 2.7 from \cite{Yu1994} we have
\begin{align}
\label{b2}
   \Big| \p\Big(\max_{1 \leq j \leq p}n^{-1/2}\sum_{i=1}^mB_{ij}\leq t\Big)-\p\Big(\max_{1 \leq j \leq p}n^{-1/2}\sum_{i=1}^m\tilde B_{ij}\leq t\Big)\Big|& \leq (m-1)\beta(r)\\
   \Big| \p\Big(\max_{1 \leq j \leq p}n^{-1/2}\Big|\sum_{i=1}^mS_{ij}\Big|\leq t\Big)-\p\Big(\max_{1 \leq j \leq p}n^{-1/2}\Big|\sum_{i=1}^m\tilde S_{ij}\Big|\leq t\Big)\Big| &\leq (m-1)\beta(q)
\end{align}
Using independence and the fact that $\tilde S_{ij}$ is bounded by $rD_n$ we obtain by Lemma D.3 from \cite{Chetverikov2020} that
\begin{align}
    \E\left[\max_{1 \leq j \leq p}\Big|n^{-1/2}\sum_{i=1}^m\tilde S_{ij}\Big|\right]\lesssim \sqrt{r/q \bar \sigma(r)\log(p)}+n^{-1/2}rD_n\log(p) 
\end{align}
so that 
\begin{align}
\label{p3}
     \p\Big(\max_{1 \leq j \leq p}n^{-1/2}\Big|\sum_{i=1}^m\tilde S_{ij}\Big|> t\Big) &\lesssim \left(\sqrt{r/q \bar \sigma(r)\log(p)}+n^{-1/2}rD_n\log(p)\right)/t   
\end{align}
\end{proof}

Let $\delta_1=\sqrt{n^{-c_1}/\log(pn)}$. Combining \eqref{b1}, \eqref{b2}, \eqref{p3} (choose $t=\delta_1$ in \eqref{p3}) we obtain for $t>x_0$ that
\begin{align}
\label{p300}
    \p(T_n\leq t)& \leq  \p\Big(\max_{1 \leq j \leq p}n^{-1/2}\sum_{i=1}^mB_{ij}\leq t+\delta_1+qD_nn^{-1/2}\Big)+n^{-c}+(m-1)\beta(q)\\
    & \leq \p\Big(\max_{1 \leq j \leq p}n^{-1/2}\sum_{i=1}^m\tilde B_{ij}\leq t+\delta_1+qD_nn^{-1/2}\Big)+n^{-c}+(m-1)\beta(q)\\
    & \leq  \p\Big(\max_{1 \leq j \leq p}n^{-1/2}\sum_{i=1}^m\tilde B_{ij}\leq t\Big)+n^{-c}+ \Big(\frac{qD_n^2\log(pn)^{10}}{n}\Big)^{1/6}+(m-1)\beta(r)
\end{align}
where we used Lemma \ref{pl1} for the last line. A similar argument also yields the reverse inequality so that for all $t>x_0$ we have
\begin{align}
\label{p301}
   \sup_{t >x_0}\left|\p(T_n\leq t)-\p\Big(\max_{1 \leq j \leq p}n^{-1/2}\sum_{i=1}^m\tilde B_{ij}\leq t\Big)\right|\lesssim n^{-c}+(m-1)\beta(r)
\end{align}

Now we apply Theorem 4.1 from \cite{Chetverikov2020}  (note that for at least one $j$ we have $\bar \sigma(q) \simeq n^{-1}\sum_{i=1}^m\text{Var}(\tilde B_{ij})$ and that for all $j$ it holds that $\tilde B_{ij}\lesssim qD_n$)  to further obtain that
\begin{align}
     \sup_{t >x_0}\left|\p(T_n\leq t)-\p\Big(\max_{1 \leq j \leq p}\sqrt{mq/n}Y_j\leq t\Big)\right|\lesssim n^{-c}+(m-1)\beta(r)
\end{align}
The result then follows by the same arguments as in step 4 of the proof of Theorem E.1 from \cite{Chernozhukov2018} where we use Lemma C.3 from \cite{Chetverikov2020} instead of Step 3.\\

\begin{Lemma}
\label{pl1}
     Suppose that the conditions of Theorem \ref{pt1} are fulfilled for $q=1, r=0$ and that $X_1,...,X_n$ are independent. Then for for any $x_0>0$ we have that for all $t>x_0$ and $\epsilon>0$ it holds that
    \begin{align}
        \p\Big(\sup_{1 \leq j \leq p}n^{-1/2}\sum_{i=1}^nX_{ij}\leq t+\epsilon\Big)-\p\Big(\sup_{1 \leq j \leq p}n^{-1/2}\sum_{i=1}^nX_{ij}\leq t\Big)\lesssim \Big(\frac{D_n^2\log(pn)^{10}}{n}\Big)^{1/6}+\epsilon\sqrt{\log(pn)}
    \end{align}
\end{Lemma}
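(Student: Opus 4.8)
The plan is to treat this as a standard anti-concentration statement for the maximum of a sum of independent bounded vectors, proved by comparison to a Gaussian maximum. First I would introduce the centered Gaussian vector $Z = (Z_1,\dots,Z_p)$ with covariance matrix $n^{-1}\sum_{i=1}^n \E[X_i X_i^\top]$, i.e. the Gaussian analogue of $n^{-1/2}\sum_i X_i$. Note that since the conditions of Theorem \ref{pt1} are assumed to hold with $q=1$, $r=0$ (so $\bar\sigma(1)\simeq 1$, $D_n\log(pn)^5 n^{-1/2}\lesssim n^{-c_2}$, and there is at least one coordinate $j$ with variance bounded below), the Gaussian comparison machinery applies in exactly the regime we need. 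The strategy is then to split the increment into three pieces via a ``sandwich'':
\begin{align}
\p\Big(\max_j n^{-1/2}{\textstyle\sum_i} X_{ij}\leq t+\epsilon\Big)-\p\Big(\max_j n^{-1/2}{\textstyle\sum_i} X_{ij}\leq t\Big)
\leq \big[\text{GA}_1\big] + \big[\text{anti-conc of }\max_j Z_j\big] + \big[\text{GA}_2\big],
\end{align}
where $\text{GA}_1,\text{GA}_2$ are the errors in replacing the true maximum by the Gaussian maximum at levels $t+\epsilon$ and $t$ respectively, and the middle term is $\p(\max_j Z_j \leq t+\epsilon) - \p(\max_j Z_j \leq t)$.

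For the Gaussian approximation errors I would invoke a high-dimensional CLT for independent bounded vectors — concretely, Theorem 4.1 of \cite{Chetverikov2020} (the same result already used in the proof of Theorem \ref{pt1}), or equivalently the Chernozhukov–Chetverikov–Kato bound — which gives $\text{GA}_1, \text{GA}_2 \lesssim \big(D_n^2\log(pn)^7/n\big)^{1/6}$, comfortably dominated by the stated $\big(D_n^2\log(pn)^{10}/n\big)^{1/6}$. For the anti-concentration of the Gaussian maximum I would use Nazarov's inequality (as in Lemma A.1 of \cite{Chernozhukov2017} / \cite{Chetverikov2020}): if $\min_j \mathrm{Var}(Z_j) \geq \underline{\sigma}^2 > 0$ then $\p(\max_j Z_j \leq t+\epsilon) - \p(\max_j Z_j \leq t) \lesssim \epsilon \underline{\sigma}^{-1}(1 + \sqrt{\log p})$. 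Here the lower bound on the variances is exactly what $\bar\sigma(1)\gtrsim 1$ delivers for the relevant coordinate; since we only need anti-concentration of the max (not of every coordinate), a single coordinate with non-degenerate variance suffices, and restricting to $t > x_0 > 0$ ensures we are away from the degenerate left tail. This yields the $\epsilon\sqrt{\log(pn)}$ term. Summing the three contributions and absorbing constants gives the claimed bound.

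The main obstacle — really the only subtle point — is the interplay between the degeneracy allowed in all-but-one coordinate and the anti-concentration step: Nazarov-type bounds are usually stated under a uniform lower bound on coordinate variances, whereas here only one coordinate is guaranteed non-degenerate. I would handle this by the same device used throughout the paper's Gaussian-approximation arguments (cf.\ the use of Lemma C.3 of \cite{Chetverikov2020} and step 4 of Theorem E.1 of \cite{Chernozhukov2018}): one augments the coordinate list or perturbs the degenerate coordinates by an infinitesimal independent noise, applies Nazarov to the non-degenerate system, and lets the perturbation vanish — the restriction $t>x_0$ guarantees this does not affect the relevant event. Everything else is routine bookkeeping of the three error terms.
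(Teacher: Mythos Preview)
Your proposal is correct and matches the paper's approach essentially line for line: the paper's proof simply points to Lemma~4.4 of \cite{Chernozhukov22} (the same sandwich via Gaussian comparison you describe), substituting Theorem~4.1, equation~(56) and Lemma~C.3 of \cite{Chetverikov2020} for the Gaussian approximation and anti-concentration steps. In particular, the degeneracy issue you flag --- only one coordinate guaranteed non-degenerate under $\bar\sigma(1)\gtrsim 1$ --- is exactly why the paper invokes Lemma~C.3 of \cite{Chetverikov2020} rather than the plain Nazarov bound, so your proposed workaround coincides with the paper's.
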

\begin{proof}
   The proof is the same as for Lemma 4.4 in \cite{Chernozhukov22} but we use Theorem 4.1, equation (56) and Lemma C.3 from \cite{Chetverikov2020} instead of Lemma 4.3 and J.3.
\end{proof}

We now describe the block multiplier bootstrap which is different from the one defined in \cite{Chetverikov2020}, to be precise we only need to choose one block size instead of two. Let $\nu_j$ be a sequence of iid standard normal variables and define for any $1 \leq l \leq n$ the variables
\begin{align}
    Y_{ij}=l^{-1/2}\nu_i\sum_{k=0}^{l-1}X_{i+k,j}~.
\end{align}
Now the bootstrap version of $T_n$ is given as
\begin{align}
    T_n^*:=\sup_{1 \leq j \leq p}n^{-1/2}\sum_{i=1}^{n-l+1}Y_{ij}~.
\end{align}

We first evaluate how well the conditional covariance matrix of the partial block sums approximates the covariance structure of the original partial sums in the supremum norm, to that end we define $\Sigma$ and $\hat \Sigma$ by
\begin{align}    \Sigma_{kh}&=\E\Big[\Big(n^{-1/2}\sum_{i=1}^nX_{ik}\Big)\Big(n^{-1/2}\sum_{i=1}^nX_{ih}\Big)\Big] \quad 1 \leq k,h \leq p\\
    \hat \Sigma_{kh}&=\E^*\Big[\Big(n^{-1/2}\sum_{i=1}^nY_{ik}\Big)\Big(n^{-1/2}\sum_{i=1}^nY_{ih}\Big)\Big]   \quad 1 \leq k,h \leq p  ~,
\end{align}
respectively.

\begin{theorem}
\label{pt2}
Suppose that $\beta(n)\lesssim n^{-3}$ and that one of the following assumptions holds:
\begin{enumerate} 
    \item[1)] Assume that $\log(pn)\simeq \log(n)$ and that there exists constants $c_1,c_2,c_3>0$ as well as $\delta_3<c_1$ such that
    \begin{align}
        D_n/\sqrt{n}&\lesssim n^{-c_1}  \\
        n\beta(n^{c_1})&\lesssim n^{-c_2} \\
        D_nn^{c_3}&\lesssim l \lesssim n^{2c_1-c_3}    \\
        \bar \sigma(q) &\gtrsim  1 \\
        \bar \sigma(q) \lor \bar \sigma(r) &\lesssim 1
    \end{align}  
where $q=n^{1/2-\delta_3}D_n^{-1}$ and $r=q^{1-\delta_4}$.
         
    \item[2)] Assume that $\log(pn)\simeq n^{\gamma}$ for some $\gamma>0$ and that there exists constants $d_1,d_2,d_3,d_4>0$ and $0<2\delta< d_1 \land (1/2-5\gamma/2)$ such that
    \begin{align}
        n^{\gamma-1}D_n^2&\lesssim n^{-d_1}\\
        n^{19\gamma-1-d_1}D_n^2&\lesssim n^{-d_2}\\
        n^{1-d_1+2\delta}\beta(n^{1/2-5\gamma/2-2\delta}D_n^{-1})&\lesssim n^{-d_3}\\
        D_nn^{3/2\gamma+d_4}&\lesssim l=o(n^{1-7\gamma}D_n^{-2})\\
        n\beta(n^{1-7\gamma}D_n^{-2})&=o(1)\\
        \bar \sigma(q) &\gtrsim  1 \\
        \bar \sigma(q) \lor \bar \sigma(r) &\lesssim 1
    \end{align}
    where $q=n^{1/2-\gamma/2-\delta}D_n^{-1}$ for some small $\delta>0$ and $r=q^2D_nn^{-(1+3\gamma)/2}$.
\end{enumerate}
    We then have for any $x_0>0$ that
    \begin{align}
     \sup_{t \geq x_0}\Big|\p(T_n \leq t)-\p^*(T_n^*\leq t)\Big|\lesssim n^{-c_0}
    \end{align}
    holds with probability $1-o(1)$ for some $c_0>0$ that depends only on $c_1,c_2,c_3$ (or $d_1,d_2,d_3,d_4, \delta)$ and the implied constants in the assumptions.
\end{theorem}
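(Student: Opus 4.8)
\textbf{Proof plan for Theorem \ref{pt2}.} The plan is to follow the standard three-step route for high-dimensional block-bootstrap consistency: (i) approximate the distribution of $T_n$ by a Gaussian maximum (this is already Theorem \ref{pt1}); (ii) note that, conditionally on the sample, $T_n^*$ is \emph{exactly} a Gaussian maximum with an explicit random covariance $\hat\Sigma$; (iii) compare the two Gaussian maxima by controlling $\|\hat\Sigma-\Sigma\|_\infty$. First I would apply Theorem \ref{pt1} with the big/small block sizes $q,r$ prescribed in the two cases, checking that its five conditions reduce to the listed hypotheses (e.g.\ in case 1, $qD_nn^{-1/2}=n^{-\delta_3}$ handles the first, $r/q=q^{-\delta_4}$ with $q$ polynomially large the fourth, and the remaining three are similar log-adjusted polynomial bounds) and that the remainder $n^{-c}+(m-1)\beta(r)$ there is $\lesssim n^{-c_0}$, which since $m-1\le n$ follows from $n\beta(r)\lesssim n^{-c_0}$, itself a consequence of $\beta(n)\lesssim n^{-3}$ and the lower bound on $r$ (case 1), resp.\ of $n\beta(n^{1-7\gamma}D_n^{-2})=o(1)$ and the prescribed $r$ (case 2). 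This yields
\begin{align}
\label{pp1}
\sup_{t\ge x_0}\Big|\p(T_n\le t)-\p\big(\max_{1\le j\le p}Y_j\le t\big)\Big|\lesssim n^{-c_0},
\end{align}
with $Y\sim N\big(0,(mq)^{-1}\sum_{i=1}^m\E[B_iB_i^\top]\big)$; the same Bartlett-type and small-block estimates as in Step 3 show this covariance lies within an $n^{-c_0}$-error of $\Sigma$ in $\|\cdot\|_\infty$, so I may replace it by $\Sigma$ in \eqref{pp1}.

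Next I would observe that $n^{-1/2}\sum_{i=1}^{n-l+1}Y_{ij}=(nl)^{-1/2}\sum_{i=1}^{n-l+1}\nu_i\big(\sum_{k=0}^{l-1}X_{i+k,j}\big)$ is, under $\p^*$, a linear functional of the i.i.d.\ $N(0,1)$ multipliers $\nu_i$, so the vector $\big(n^{-1/2}\sum_iY_{ij}\big)_{1\le j\le p}$ is conditionally centered Gaussian with covariance $\hat\Sigma$ (indeed $\hat\Sigma_{kh}=(nl)^{-1}\sum_{i=1}^{n-l+1}\big(\sum_{a=0}^{l-1}X_{i+a,k}\big)\big(\sum_{b=0}^{l-1}X_{i+b,h}\big)$, the matrix defined before the theorem). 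Hence $\p^*(T_n^*\le t)=\p(\max_j\hat Y_j\le t)$ with $\hat Y\sim N(0,\hat\Sigma)$, and the Gaussian comparison bound already used in the proof of Theorem \ref{pt1} (Lemma C.3 in \cite{Chetverikov2020} with Gaussian anti-concentration, valid on $[x_0,\infty)$ since $\Sigma$ may be degenerate in all but one coordinate) gives
\begin{align}
\sup_{t\ge x_0}\Big|\p(\max_j Y_j\le t)-\p(\max_j\hat Y_j\le t)\Big|\lesssim\|\hat\Sigma-\Sigma\|_\infty^{1/3}\Big(1\vee\log\big(p/\|\hat\Sigma-\Sigma\|_\infty\big)\Big)^{2/3}.
\end{align}
Together with \eqref{pp1}, this reduces the theorem to the covariance estimate $\|\hat\Sigma-\Sigma\|_\infty\lesssim n^{-c_1'}$ with probability $1-o(1)$, for some $c_1'>0$; the asserted rate $n^{-c_0}$ then follows, with $c_0$ determined by $c_1'$ and, in the nearly-exponential regime, by $\gamma$.

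The heart of the argument is thus the covariance estimate, which I would split as $\|\hat\Sigma-\Sigma\|_\infty\le\|\hat\Sigma-\E\hat\Sigma\|_\infty+\|\E\hat\Sigma-\Sigma\|_\infty$. For the bias, reindexing the overlapping double block sum gives $\E\hat\Sigma_{kh}=n^{-1}\sum_{|s|<l}(1-|s|/l)\sum_u\mathrm{Cov}(X_{uk},X_{u-s,h})+O(lD_n^2/n)$, whereas $\Sigma_{kh}=n^{-1}\sum_s\sum_u\mathrm{Cov}(X_{uk},X_{u-s,h})$; the covariance inequality for bounded $\beta$-mixing variables $|\mathrm{Cov}(X_{uk},X_{vh})|\lesssim D_n^2\beta(|u-v|)$ together with $\beta(s)\lesssim s^{-3}$ then yields $\|\E\hat\Sigma-\Sigma\|_\infty\lesssim D_n^2/l$, which is $\lesssim n^{-c_1'}$ by the lower bound on $l$. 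For the fluctuation, $\hat\Sigma_{kh}-\E\hat\Sigma_{kh}=(nl)^{-1}\sum_{i=1}^{n-l+1}\big(W_i^{kh}-\E W_i^{kh}\big)$ with $W_i^{kh}=\big(\sum_{a=0}^{l-1}X_{i+a,k}\big)\big(\sum_{b=0}^{l-1}X_{i+b,h}\big)$; the array $\{W_i^{kh}\}_i$ is bounded by $l^2D_n^2$, inherits the $\beta$-mixing coefficients of $\{X_i\}$ shifted by $l$, and satisfies $\mathrm{Var}\big(\sum_iW_i^{kh}\big)\lesssim nl^3D_n^4$ by standard partial-sum moment bounds (sharper when $\bar\sigma(q)\vee\bar\sigma(r)\simeq1$). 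A Bernstein-type inequality for bounded $\beta$-mixing sums — via coupling the block-decomposed sum to its independent analogue in the spirit of \cite{Yu1994} and then invoking the classical Bernstein bound — combined with a union bound over the $p^2$ pairs $(k,h)$ then controls $\|\hat\Sigma-\E\hat\Sigma\|_\infty$ by a sub-Gaussian term of order $\big(lD_n^4\log(pn)/n\big)^{1/2}$ plus a Bernstein correction and a mixing remainder. That all three are $\lesssim n^{-c_1'}$ is exactly what the two assumption packages are calibrated to deliver: in case 1 via $\log(pn)\simeq\log n$, so the logarithms contribute no polynomial; in case 2 by absorbing the $n^\gamma$-size logarithm through the finer scale $r=q^2D_nn^{-(1+3\gamma)/2}$ and the margins $0<2\delta<d_1\wedge(1/2-5\gamma/2)$ and $D_nn^{3\gamma/2+d_4}\lesssim l=o(n^{1-7\gamma}D_n^{-2})$.

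I expect the main obstacle to be this last step: producing an $\ell_\infty$-over-$p^2$-entries deviation bound for the overlapping-block covariance estimator $\hat\Sigma$ that is simultaneously compatible with a diverging block length $l$, a diverging almost-sure envelope $D_n$, a possibly nearly-exponentially large dimension $p$, and only polynomially decaying $\beta$-mixing — this is precisely what forces the two-regime split and the intricate, interlocking choices of the auxiliary scales $q$ and $r$. A secondary point, inherited verbatim from Theorem \ref{pt1}, is that the comparison (hence the conclusion) holds only on $\{t\ge x_0\}$: because the limiting covariance may be degenerate in all but one coordinate, Gaussian anti-concentration is available only away from the origin.
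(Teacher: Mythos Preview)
Your three-step route (Gaussian approximation for $T_n$ via Theorem \ref{pt1}; recognize $T_n^*$ as a conditionally Gaussian maximum with covariance $\hat\Sigma$; compare the two Gaussians by controlling $\|\hat\Sigma-\Sigma\|_\infty$) is exactly what the paper does, and your observation about needing anti-concentration only on $[x_0,\infty)$ is the content of the paper's Lemma \ref{pl3}. The only genuine difference is in how the covariance estimate is obtained. The paper does \emph{not} work with the block products $W_i^{kh}$ directly; instead (Lemma \ref{pl2}) it expands $\hat\Sigma_{kh}$ as a Bartlett-weighted sum over lags, $\hat\sigma_0^{kh}+\sum_{i=1}^{l}\frac{l-i}{l}(\hat\sigma_i^{kh}+\hat\sigma_i^{hk})+O(D_n^2/n)$ with $\hat\sigma_i^{kh}=n^{-1}\sum_jX_{jk}X_{(j+i)h}$, and then controls $\max_{k,h,i\le l}|\hat\sigma_i^{kh}-\sigma_i^{kh}|$ by coupling the \emph{individual products} $X_{jk}X_{(j+i)h}$ (bounded by $D_n^2$) to an independent-block version via Yoshihara's construction, followed by sub-Gaussian concentration and a union bound over the $lp^2$ triples $(i,k,h)$. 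The bias terms $\sum_{i\le l}(i/l)|\sigma_i^{kh}|+\sum_{i>l}|\sigma_i^{kh}|$ and the analogous ones for $\E[YY^\top]$ are handled by $|\sigma_i^{kh}|\lesssim D_n^2\beta(i)$ and $\beta(n)\lesssim n^{-3}$.

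This buys the paper two things relative to your direct Bernstein-on-$W_i^{kh}$ plan: the envelope entering the concentration step is $D_n^2$ rather than $l^2D_n^2$, and the variance proxy is simply the number of big blocks in a second coupling (with auxiliary block size $\mu$) rather than a moment bound on the overlapping products. The resulting exception probability has the clean form $\mu l p^2\exp\bigl(-Ct^2n/(D_n^2\mu)\bigr)+n\beta(\mu-l)$, and the paper's choices $t\simeq\log(pn)^{-3}n^{-\delta_1}$, $\mu\simeq nD_n^{-2}\log(pn)^{-7}$ then slot directly into the stated conditions in both regimes. Your approach is valid in principle, but because your sub-Gaussian term $(lD_n^4\log(pn)/n)^{1/2}$ and Bernstein correction $\mu l D_n^2\log(pn)/n$ carry an extra factor of $l$ (and possibly $D_n^2$) compared to the paper's, you would have to work harder --- likely invoking the assumption $\bar\sigma(q)\vee\bar\sigma(r)\lesssim 1$ to sharpen $\mathrm{Var}\,W_i^{kh}$ as you hint --- to verify that the \emph{given} constants $c_1,c_2,c_3$ (respectively $d_1,\dots,d_4$) actually close the argument without additional hypotheses.
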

\begin{proof}
We first consider the case $\log(pn)\simeq \log(n)$, the case $\log(pn)\simeq n^{\gamma}$ follows by similar considerations. $c$ denotes a constant  that does not depend on $n$ whose value may change form line to line. \\

Combine Lemma \ref{pl2} and \ref{pl3} (choose $t=\log(pn)^{-3}n^{-\delta_1}$ for some $\delta_1>0$ and $\mu\simeq nD_n^{-2}\log(pn)^{-7}$) to obtain that
     \begin{align}
     \label{p10}
        \sup_{t >x_0}\left|\p(T_n^*\leq t)-\p\Big(\max_{1 \leq j \leq p}Y_j\leq t\Big)\right|\lesssim n^{-c}+D_n^{2/3}(q^{-2}+l^{-2}+n^{-1})^{1/3}\log(pn)
     \end{align}
     holds with probability $1-o(1)-n\beta(n^{2c_1-\delta_2}-l)$ for some $c>0$, any $1 \leq q,l \leq n$ and any $\delta_2>0$. 
     
     We now apply Theorem \ref{pt1} with $q=n^{1/2-\delta_3}D_n^{-1}$ and $r=q^{1-\delta_4}$ where $\delta_4>0$ is arbitrary and $0 < \delta_3 < c_1$. This yields
     \begin{align}
     \label{p11}
      \sup_{t >x_0}\left|\p(T_n\leq t)-\p\Big(\max_{1 \leq j \leq p}Y_j\leq t\Big)\right|&\lesssim n^{-c}+D_nn^{1/2+\delta_3}\beta(n^{c_1-\delta_5})   \\
      & \lesssim n^{-c}+n\beta(n^{c_1-\delta_5}) 
     \end{align}
     for any $\delta_5>\delta_3$. Combining \eqref{p10} and \eqref{p11} yields the desired result by the assumption on $l$.\\

     For the other case we choose $\mu$ and $t$ as above, $q=n^{1/2-\gamma/2-\delta}D_n^{-1}, r=q^2D_nn^{-(1+3\gamma)/2}$.

\end{proof}

\begin{Lemma}  
\label{pl2}
    Suppose that $\beta(n)\lesssim n^{-3}$, then some $C>0$ and all $1 \leq l, \mu, q \leq n$ we have that
    \begin{align}
    \norm{\Sigma-\hat \Sigma}_\infty &\lesssim t+O(D_n^2l^{-2})+O(D_n^2n^{-1})\\
    \norm{\Sigma-\E[YY^T]}_\infty &\lesssim O(D_n^2q^{-2})+O(D_n^2n^{-1})
    \end{align}
    holds with probability at least $1-\mu lp^2\exp(-Ct^2\frac{n}{D_n^2\mu})-n\beta(\mu-l)$. 
\end{Lemma}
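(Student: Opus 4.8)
The two displayed estimates are of different character: the bound on $\norm{\Sigma-\E[YY^T]}_\infty$ is purely deterministic, while the bound on $\norm{\Sigma-\hat\Sigma}_\infty$ carries the stochastic term $t$. The plan is to treat the two separately. Throughout I would use that, since the $X_{ij}$ are centered and bounded by $D_n$, the covariance inequality for $\beta$-mixing sequences gives $|\E[X_{ik}X_{jh}]|\lesssim D_n^2\min\{1,\beta(|i-j|)\}$ for all $i,j,k,h$, and that the standing assumption $\beta(n)\lesssim n^{-3}$ yields both $\sum_{k>a}\beta(k)\lesssim a^{-2}$ and $\sum_{k\ge 0}\min\{1,\beta(k)\}\lesssim 1$.

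For $\norm{\Sigma-\E[YY^T]}_\infty$ I would write both matrices entrywise as weighted sums of the lagged covariances $c_{ij}:=\E[X_{ik}X_{jh}]$, namely $\Sigma_{kh}=n^{-1}\sum_{i,j=1}^n c_{ij}$ and $\E[YY^T]_{kh}=(mq)^{-1}\sum_{l=1}^m\sum_{i,j\in I_l}c_{ij}$, and compare them pair by pair. Splitting each double sum according to whether $|i-j|\le q$ or $|i-j|>q$, the long-range parts contribute at most $\lesssim D_n^2 q^{-2}$ to each matrix. On the complementary near-diagonal part the two weighted sums coincide except for pairs $(i,j)$ that straddle a block boundary or fall into the small blocks $J_l$ or the remainder $J_{m+1}$, and except for the mismatch between the normalisations $n^{-1}$ and $(mq)^{-1}$; counting these exceptional pairs and using $\sum_k\min\{1,\beta(k)\}\lesssim 1$ bounds their total contribution by $\lesssim D_n^2 n^{-1}$. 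This yields the second inequality, and no probability is spent here.

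For $\norm{\Sigma-\hat\Sigma}_\infty$ I would start from the closed form of the conditional covariance. Because the $\nu_i$ are i.i.d.\ standard normal we have $\E^*[\nu_i\nu_{i'}]=\1\{i=i'\}$, hence $\hat\Sigma_{kh}=(nl)^{-1}\sum_{i=1}^{n-l+1}\big(\sum_{a=0}^{l-1}X_{i+a,k}\big)\big(\sum_{b=0}^{l-1}X_{i+b,h}\big)$, a Bartlett/HAC-type estimator of the long-run covariance. Write $\hat\Sigma-\Sigma=(\hat\Sigma-\E\hat\Sigma)+(\E\hat\Sigma-\Sigma)$. The deterministic bias $\E\hat\Sigma-\Sigma$ is of the claimed order $O(D_n^2 l^{-2})+O(D_n^2 n^{-1})$: the first term comes from discarding lagged covariances beyond lag $l$ (bounded by $D_n^2\sum_{k\ge l}\beta(k)\lesssim D_n^2 l^{-2}$), the second from the incomplete weighting near the two ends of the sample. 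For the fluctuation write $\hat\Sigma_{kh}-\E\hat\Sigma_{kh}=(nl)^{-1}\sum_{i=1}^{n-l+1}(V_i-\E V_i)$ with $V_i:=\big(\sum_{a=0}^{l-1}X_{i+a,k}\big)\big(\sum_{b=0}^{l-1}X_{i+b,h}\big)$; each $V_i$ is bounded and is a function of $X_i,\dots,X_{i+l-1}$, so $\{V_i\}$ is $\beta$-mixing with coefficients $\lesssim\beta(\cdot-l)$. I would then partition $\{1,\dots,n-l+1\}$ into consecutive blocks of length $\mu$, apply a Berbee-type coupling (as in Corollary 2.7 of \cite{Yu1994}) to replace $\{V_i\}$ by an array that agrees on each block and is independent across blocks, at total cost $\lesssim n\beta(\mu-l)$ (the shift by $l$ absorbing the overlap of consecutive $V$-blocks), and finally bound the resulting sum of independent, mean-zero, bounded block sums by a Hoeffding/Bernstein inequality, with a union bound over the $p^2$ entries and over the $O(\mu l)$ auxiliary splitting parameters. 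This produces a bound of the form $\mu l p^2\exp(-Ct^2 n/(D_n^2\mu))$ for $\norm{\hat\Sigma-\E\hat\Sigma}_\infty>t$; combining it with the bias bound gives the first inequality.

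The deterministic comparison is routine bookkeeping with the polynomial mixing decay. The delicate step is the stochastic one: obtaining a concentration inequality for the quadratic, dependent, bounded statistic $\hat\Sigma_{kh}$ that is uniform over all $p^2$ coordinates with the correct dependence on $D_n$, $l$ and $\mu$. The two subtleties I would expect to cost the most care are handling the $l$-step overlap of consecutive $V$-blocks when passing to block independence --- which is exactly what forces $\beta(\mu-l)$ rather than $\beta(\mu)$ in the coupling error --- and keeping the variance proxy in the Bernstein step sharp enough that the exponent scales as $n/(D_n^2\mu)$; the $\mu l p^2$ prefactor then results from the union bounds over entries, lags, and blocking phases.
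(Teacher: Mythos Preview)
Your overall plan coincides with the paper's: both arguments treat $\Sigma-\E[YY^T]$ purely deterministically via the polynomial decay of $|\E[X_{ik}X_{jh}]|\lesssim D_n^2\beta(|i-j|)$, and both split $\hat\Sigma-\Sigma$ into a deterministic bias plus a stochastic fluctuation that is controlled by a Berbee--type coupling followed by sub-Gaussian concentration and a union bound. The one substantive organisational difference lies in the fluctuation step. You keep the quadratic blocks $V_i=\big(\sum_{a<l}X_{i+a,k}\big)\big(\sum_{b<l}X_{i+b,h}\big)$ together and appeal to Hoeffding/Bernstein after blocking; the paper instead first expands $\hat\Sigma_{kh}$ as the Bartlett sum $\hat\sigma_0^{kh}+\sum_{f=1}^{l}\tfrac{l-f}{l}(\hat\sigma_f^{kh}+\hat\sigma_f^{hk})+O(D_n^2 n^{-1})$ with $\hat\sigma_f^{kh}=n^{-1}\sum_j X_{jk}X_{(j+f)h}$, and then applies Yoshihara's coupling and sub-Gaussian concentration to each lag term $\hat\sigma_f^{kh}-\sigma_f^{kh}$ separately.

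This difference is what explains the two points you flag as delicate. Working lag by lag, each summand is bounded by $D_n^2$, so the sub-Gaussian exponent comes out directly with the scale $n/(D_n^2\mu)$; moreover the factor $l$ in the prefactor $\mu l p^2$ is simply the union bound over the $l$ lags $f$. In your aggregated formulation the crude bound $|V_i|\le l^2D_n^2$ would insert an extra $l^2$ (and an extra $D_n^2$) into the denominator of the exponent, and there is no evident variance estimate for the block sums of the $V_i$ that removes it --- so the sentence ``keeping the variance proxy in the Bernstein step sharp enough that the exponent scales as $n/(D_n^2\mu)$'' is the genuine gap. The remedy is exactly the paper's move: expand $V_i$ into its lagged products and concentrate each $\hat\sigma_f-\sigma_f$ on its own. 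Your account of the $O(\mu l)$ ``auxiliary splitting parameters'' is then also clarified: the $l$ is the union over lags, not over blocking phases.
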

\begin{proof}
    Define
    \begin{align}
        \sigma_i^{kh}&=n^{-1}\sum_{j=1}^n\E[X_{jk}X_{(j+i)h}]\\
        \hat \sigma_i^{kh}&=n^{-1}\sum_{j=1}^nX_{jk}X_{(j+i)h}
    \end{align}
    and note that
    \begin{align}
        \hat \Sigma_{kh}&=\E^*\Big[\Big(n^{-1/2}\sum_{i=1}^nY_{ik}\Big)\Big(n^{-1/2}\sum_{i=1}^nY_{ih}\Big)\Big]\\
        &=\hat\sigma^{kh}_0+\sum_{i=1}^l \frac{l-i}{l} (\hat \sigma_i^{kh}+\hat \sigma_i^{hk})-  (nl)^{-1}\sum_{f=1}^{l-1}\sum_{i=1}^{l-f-1}(l-i-f)(X_{ik}X_{(i+f)h}+X_{ih}X_{(i+f)k}) 
        \\ & \quad \quad \quad -(nl)^{-1}\sum_{i=1}^{l-1}(l-i)(X_{ik}X_{ih}) \\
        &=\hat\sigma^{kh}_0+\sum_{i=1}^l \frac{l-i}{l} (\hat \sigma_i^{kh}+\hat \sigma_i^{hk})+O(D_n^2n^{-1})~. 
    \end{align}
    Hence, leveraging a similar calculation for $\Sigma_{kh}$, we obtain
    \begin{align}
       \Big| \Sigma_{kh}-\hat \Sigma_{kh}\Big| &\leq |\hat \sigma_0^{kh}-\sigma_0^{kh}|+\sum_{i=1}^l\frac{l-i}{l}|\hat \sigma_i^{kh}+\hat \sigma_i^{hk}-2\sigma_i^{kh}|+2\sum_{i=1}^l\frac{i}{l}|\sigma_i^{kh}|+2\sum_{i=l+1}^\infty |\sigma_i^{kh}|+O(D_n^2n^{-1})\\
       & \leq |\hat \sigma_0^{kh}-\sigma_0^{kh}|+\sum_{i=1}^l\frac{l-i}{l}|\hat \sigma_i^{kh}+\hat \sigma_i^{hk}-2\sigma_i^{kh}|+O(D_n^2l^{-2})+O(D_n^2n^{-1})~.
    \end{align}
    Now consider the vector whose entries are given by
    \begin{align}
        X_{ik}X_{(i+f)h}-\E[ X_{ik}X_{(i+f)h}] \quad \quad \quad 1 \leq f \leq l, 1 \leq k,h \leq p
    \end{align}
    and apply Theorem 1 from \cite{Yoshihara1978} together with standard results on the concentration of Subgaussian random variables to obtain for some $C>0$ and any $1 \leq \mu \leq n$ that
    \begin{align}
        \sup_{1 \leq k,h \leq p}\Big| \Sigma_{kh}-\hat \Sigma_{kh}\Big| &\lesssim  t+O(D_n^2l^{-2})+O(D_n^2n^{-1})
    \end{align}
    holds with probability  at least $1-lp^2\exp\Big(-Ct^2\frac{n}{D_n^2\mu}\Big)-n\beta(\mu-l)$.

    Similarly we have uniformly in $1 \leq k,h \leq p$ that
    \begin{align}
        \E[(YY^T)]_{kh}=\sigma_0^{kh}+2\sum_{i=1}^q\frac{q-i}{q}\sigma_i^{kh}=\Sigma_{kh}+O(D_n^2q^{-2})+O(D_n^2n^{-1}) ~.
    \end{align}
\end{proof}

\begin{Lemma}
\label{pl3}
    Let $Z_1, Z_2$ be two $p$-dimensional mean zero Gaussian vectors. Then for any $x_0>0$ we have
    \begin{align}
        \sup_{t\geq x_0}\Big|\p\Big(\max_{1 \leq j \leq p}Z_{2j}\leq t\Big)-\p\Big(\max_{1 \leq j \leq p}Z_{1j}\leq t\Big)\Big|\lesssim \Delta^{1/3}\log(np/\Delta)
    \end{align}
\end{Lemma}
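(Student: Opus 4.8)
Write $\Sigma_i=\E[Z_iZ_i^T]$ and let $\Delta=\norm{\Sigma_1-\Sigma_2}_\infty$; since only the marginal laws enter the statement we may take $Z_1,Z_2$ independent, and since the bound is of order $1$ when $\Delta$ is bounded away from $0$ (while the left side is at most $1$) we may assume $\Delta$ small. The plan is to run the classical Slepian-type interpolation-and-smoothing argument behind Gaussian comparison bounds, but to be careful in the anti-concentration step, because the coordinate variances $(\Sigma_i)_{jj}$ are not assumed bounded below; that is exactly where the restriction $t\geq x_0$ is used. For $\beta,\eta>0$ introduce the soft-max $F_\beta(z)=\beta^{-1}\log\big(\sum_{j=1}^pe^{\beta z_j}\big)$, which obeys $0\leq F_\beta(z)-\max_{1\leq j\leq p}z_j\leq\beta^{-1}\log p$, and a smooth non-increasing $g_{t,\eta}:\R\to[0,1]$ equal to $1$ on $(-\infty,t-\eta]$ and to $0$ on $[t,\infty)$ with $\norm{g_{t,\eta}^{(k)}}_\infty\lesssim\eta^{-k}$. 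With $m_t=g_{t,\eta}\circ F_\beta$ and $\delta=\eta+\beta^{-1}\log p\leq x_0/8$, one gets for $i=1,2$ and $t\geq x_0$
\begin{align}
\Big|\p\Big(\max_{1\leq j\leq p}Z_{ij}\leq t\Big)-\E[m_t(Z_i)]\Big|\lesssim \sup_{u\geq x_0/4}\p\Big(\big|\max_{1\leq j\leq p}Z_{ij}-u\big|\leq\delta\Big)=:A_i(\delta),
\end{align}
so it suffices to bound $\big|\E[m_t(Z_1)]-\E[m_t(Z_2)]\big|$ and the $A_i(\delta)$.

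For the comparison term, interpolate via $W(s)=\sqrt{s}\,Z_1+\sqrt{1-s}\,Z_2$; the fundamental theorem of calculus together with Gaussian integration by parts (the covariance of $W(s)$ being $s\Sigma_1+(1-s)\Sigma_2$) gives
\begin{align}
\E[m_t(Z_1)]-\E[m_t(Z_2)]=\frac12\int_0^1\sum_{j,k=1}^p(\Sigma_1-\Sigma_2)_{jk}\,\E\big[\partial_j\partial_k m_t(W(s))\big]\,ds.
\end{align}
By the chain rule and the standard bounds $\sum_j|\partial_jF_\beta|=1$, $\sum_{j,k}|\partial_jF_\beta\,\partial_kF_\beta|=1$, $\sum_{j,k}|\partial_j\partial_kF_\beta|\lesssim\beta$ one gets $\sum_{j,k}|\partial_j\partial_km_t|\lesssim\eta^{-2}+\beta\eta^{-1}$ and hence $\big|\E[m_t(Z_1)]-\E[m_t(Z_2)]\big|\lesssim\Delta(\eta^{-2}+\beta\eta^{-1})$; this part is routine.

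The substantive step is bounding $A_i(\delta)$ without a variance lower bound, and I expect this to be the only genuinely non-mechanical part. Fix a cut-off $\sigma_0^2\simeq x_0^2/\log(np)$ with a small implied constant and split $\{1,\dots,p\}=S_i\cup S_i^c$, $S_i=\{j:(\Sigma_i)_{jj}\geq\sigma_0^2\}$. On $\{|\max_jZ_{ij}-u|\leq\delta\}$ with $u\geq x_0/4$, either $\max_{j\in S_i}Z_{ij}\in[u-\delta,u+\delta]$, or $\max_{j\in S_i^c}Z_{ij}=\max_jZ_{ij}\geq u-\delta\geq x_0/8$, so
\begin{align}
\p\Big(\big|\max_jZ_{ij}-u\big|\leq\delta\Big)\leq \p\Big(\max_{j\in S_i}Z_{ij}\in[u-\delta,u+\delta]\Big)+\sum_{j\in S_i^c}\p\big(Z_{ij}\geq x_0/8\big).
\end{align}
The sum is $\lesssim p\exp(-c\,x_0^2/\sigma_0^2)\lesssim p(np)^{-c}\lesssim n^{-c'}$, while Nazarov's anti-concentration inequality applied to $(Z_{ij})_{j\in S_i}$, whose variances are $\geq\sigma_0^2$, bounds the first term by $\lesssim\delta\sigma_0^{-1}\sqrt{\log p}\lesssim\delta\log(np)$ (absorbing the fixed constant $x_0$). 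Hence $A_i(\delta)\lesssim\delta\log(np)+n^{-c'}$.

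Finally, choosing $\beta=\eta^{-1}\log p$ makes $\delta\simeq\eta$ and $\eta^{-2}+\beta\eta^{-1}\simeq\eta^{-2}\log p$, so combining the three estimates yields
\begin{align}
\sup_{t\geq x_0}\Big|\p\Big(\max_jZ_{1j}\leq t\Big)-\p\Big(\max_jZ_{2j}\leq t\Big)\Big|\lesssim \Delta\eta^{-2}\log p+\eta\log(np)+n^{-c'},
\end{align}
and optimizing over $\eta$ (balancing the first two terms, $\eta\simeq(\Delta\log p/\log(np))^{1/3}$) gives a bound of the asserted order $\Delta^{1/3}\log(np/\Delta)$ --- the $\Delta$ inside the logarithm entering through the optimal smoothing scale, and the residual $n^{-c'}$ being negligible given the polynomial-in-$n$ lower bound on $\Delta$ available in the intended application. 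The main obstacle is the anti-concentration estimate: in the classical comparison lemma all variances are assumed bounded away from $0$, which fails in the decaying-variance setting considered here, and the remedy --- truncating the low-variance coordinates using $t\geq x_0$ and a Gaussian tail bound --- is what produces the extra $\log n$ relative to the usual $\log(p/\Delta)$ rate.
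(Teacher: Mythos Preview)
Your proof is correct and follows essentially the same route as the paper's. The paper's argument is: (i) use the restriction $t\geq x_0$ to discard coordinates with variance below $c/\log(np)$ (by citing the reduction in Theorem~1 of \cite{Chetverikov2020}), and (ii) apply the Gaussian comparison theorem of \cite{Chernuzhokov2013b} with the dependence on the minimal variance $\underline\sigma$ made explicit via the Nazarov-type anti-concentration bound from \cite{chernozhukov2017}. You carry out exactly this program by hand: your split $S_i\cup S_i^c$ with threshold $\sigma_0^2\simeq x_0^2/\log(np)$ is precisely the reduction in (i), and your Slepian interpolation combined with Nazarov on $S_i$ is precisely (ii) with the $\underline\sigma$-dependence tracked. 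The only cosmetic difference is that the paper first reduces to the high-variance subvector and then compares, whereas you interpolate over all coordinates and insert the split only in the anti-concentration step; the resulting bounds are the same. Your remark that the residual $n^{-c'}$ is absorbed by the polynomial-in-$n$ lower bound on $\Delta$ in the application is also implicit in the paper's reduction step.
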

\begin{proof}
First use the arguments from the proof of Theorem 1 from \cite{Chetverikov2020} to reduce to the case $\E[Z^2_{1j}] \gtrsim \log(pn)^{-1}$. We now only need to make the implicit dependence of the constants on $\underline \sigma$ in the bound of Theorem 2 from  \cite{Chernuzhokov2013b} explicit to obtain the desired result. The dependence of the constants on $\underline \sigma$ is inherited from the use of Theorem 3 in its proof, we can make them explicit by using the bounds from Theorem 1 in \cite{chernozhukov2017} instead.
\end{proof}

\subsection{Unbounded Data}

We now extend the results of the previous subsection to unbounded data by a truncation argument. We only consider polynomial tail  assumptions (and hence at most polynomial growth of $p$) but our results can also be extended to exponential tails (and hence also exponential growth of $p$) with minor modifications. We adopt all notations from the previous subsection, only dropping the assumption that the random variables $X_{ij}$ fulfill \eqref{p400}.

\begin{theorem}
\label{boot}
     In addition to $\beta(n)\lesssim n^{-3}$ we assume 
     \begin{align}
        \bar \sigma(q) &\gtrsim  1 \\
         \bar \sigma(q) \lor \bar \sigma(r) &\lesssim 1
    \end{align}
     where $q=n^{1/2-\delta}(np)^{-1/J}/\log(n)$ for some $\delta>0$ and $r=q^{1-\tilde \delta}$ for some $\tilde \delta>0$.
   Additionally we require 
     \begin{enumerate}
         \item [D1)]          
            \begin{align}    
                  \sup_{1 \leq j \leq p}\E[|X_{1j}|^{J+\delta}]&\lesssim 1       
            \end{align}      
    for some even $J\geq 2$ and some $\delta>0$.
         \item [D2)] For the same $J,\delta$ as in $D1)$  it holds that
         \begin{align}
              \sum_{k=1}^\infty (k+1)^{J/2-1}\beta(k)^{\frac{\delta}{J+\delta}}&<\infty ~.
         \end{align}
         \item [D3)] There exists constants $e_1,e_2,e_3$ such that
         \begin{align}
             (np)^{1/J}n^{-1/2}\log(n)&\lesssim  n^{-e_1}\\
             n\beta(n^{e_1})&\lesssim n^{-e_2}\\
             (np)^{1/J}\log(n)n^{e_3}\lesssim l &\lesssim n^{2e_1-e_3}
         \end{align}
     \end{enumerate}
      Then for any $x_0>0$ we have that that with high probability
     \begin{align}
         \sup_{t \geq x_0}&\Big|\p\Big( \sup_{1 \leq j \leq p} n^{-1/2}\sum_{i=1}^nX_{ij} \leq t\Big) - \p^*\left( \sup_{1 \leq j \leq p}n^{-1/2}\sum_{i=1}^{n-l+1}Y_{ij} \leq t \right) \Big| \\ \lesssim  & o(1)~.
    \end{align}      
\end{theorem}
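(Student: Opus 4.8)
The plan is to deduce the statement from the bounded-data bootstrap result, Theorem~\ref{pt2}, by a truncation argument, so that the whole proof reduces to (i) controlling the error introduced by truncation, both unconditionally and conditionally on the data, and (ii) verifying that the truncated sample meets the hypotheses of Theorem~\ref{pt2}. I would truncate at the level $D_n=(np)^{1/J}\log(n)$ -- the very quantity appearing in $D3)$ -- and set $X^{\flat}_{ij}=X_{ij}\1\{|X_{ij}|\le D_n\}-\E\big[X_{ij}\1\{|X_{ij}|\le D_n\}\big]$, which is bounded by $2D_n$, centered, and, being a measurable function of the original process, $\beta$-mixing with coefficients no larger than $\beta(\cdot)$. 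Writing $T^{\flat}_n$ and $(T^{\flat}_n)^{*}$ for the statistics $T_n$ and $T^{*}_n$ built from $\{X^{\flat}_{ij}\}$, I would bound $\sup_{t\ge x_0}|\p(T_n\le t)-\p^{*}(T^{*}_n\le t)|$ by the triangle inequality through the three pieces $\sup_{t\ge x_0}|\p(T_n\le t)-\p(T^{\flat}_n\le t)|$, $\sup_{t\ge x_0}|\p^{*}(T^{*}_n\le t)-\p^{*}((T^{\flat}_n)^{*}\le t)|$ and $\sup_{t\ge x_0}|\p(T^{\flat}_n\le t)-\p^{*}((T^{\flat}_n)^{*}\le t)|$; the last of these is exactly Theorem~\ref{pt2}.

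For the first piece I would work on the event $\mathcal{A}_n=\{\max_{i,j}|X_{ij}|\le D_n\}$, whose complement has probability at most $np\,C D_n^{-(J+\delta)}=o(1)$ by $D1)$ and a union bound. On $\mathcal{A}_n$ one has $X_{ij}=X^{\flat}_{ij}-c_{ij}$ with the deterministic constants $c_{ij}=\E[X_{ij}\1\{|X_{ij}|>D_n\}]$ satisfying $|c_{ij}|\le CD_n^{-(J+\delta-1)}$, so $|T_n-T^{\flat}_n|\le n^{1/2}CD_n^{-(J+\delta-1)}=:\delta_n$, and since $J\ge 2$ and $D_n\gtrsim n^{1/J}$ one checks that $\delta_n\to 0$ at a polynomial rate. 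Hence $\p(T_n\le t)\le\p(T^{\flat}_n\le t+\delta_n)+o(1)$, together with the reverse inequality; combining this with the Gaussian approximation of Theorem~\ref{pt1} for the bounded sequence $\{X^{\flat}_{ij}\}$ and Gaussian anti-concentration gives $\sup_{t\ge x_0}|\p(T_n\le t)-\p(T^{\flat}_n\le t)|=o(1)$. The hypotheses of Theorem~\ref{pt1} for $\{X^{\flat}_{ij}\}$ follow from $D1)$--$D3)$ and the choices $q=n^{1/2-\delta}D_n^{-1}$, $r=q^{1-\tilde\delta}$ by the same computations as in the proof of Theorem~\ref{pt2}.

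For the second piece I would condition on the data, on $\mathcal{A}_n$. There $n^{-1/2}\sum_i(Y_{ij}-Y^{\flat}_{ij})=-n^{-1/2}l^{-1/2}\sum_i\nu_i\sum_{k=0}^{l-1}c_{i+k,j}$ is, conditionally, a centered Gaussian in $j$ with variance at most $nC^2D_n^{-2(J+\delta-1)}$, so its maximum over $1\le j\le p$ is $O(\delta_n\sqrt{\log p})\to 0$ outside a conditionally negligible set; on the other hand $(T^{\flat}_n)^{*}$ is, conditionally on the data, the maximum of a Gaussian vector whose covariance is within $o(1)$ in sup-norm of a non-degenerate target by Lemma~\ref{pl2}, so it obeys Gaussian anti-concentration on a set of data of probability $1-o(1)$ (as in Lemma~\ref{pl3}). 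Combining the two observations yields $\sup_{t\ge x_0}|\p^{*}(T^{*}_n\le t)-\p^{*}((T^{\flat}_n)^{*}\le t)|=o(1)$ with probability $1-o(1)$.

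It remains to check the hypotheses of Theorem~\ref{pt2} in the regime $\log(pn)\simeq\log(n)$ (valid since $p$ grows at most polynomially) for $\{X^{\flat}_{ij}\}$: $\beta(n)\lesssim n^{-3}$ is assumed; $D_n/\sqrt n\lesssim n^{-e_1}$, $n\beta(n^{e_1})\lesssim n^{-e_2}$ and $D_nn^{e_3}\lesssim l\lesssim n^{2e_1-e_3}$ are precisely $D3)$ with $D_n=(np)^{1/J}\log(n)$, and the internal block lengths $q=n^{1/2-\delta}D_n^{-1}$, $r=q^{1-\tilde\delta}$ coincide with those in the statement; finally the variance conditions $\bar\sigma(q)\gtrsim 1$ and $\bar\sigma(q)\vee\bar\sigma(r)\lesssim 1$ transfer from $\{X_{ij}\}$ to $\{X^{\flat}_{ij}\}$, because by H\"older's inequality $\|X_{ij}-X^{\flat}_{ij}\|_2\le CD_n^{-(J+\delta)/2+1}\to 0$ uniformly in $i,j$, so a standard $\beta$-mixing variance bound shows each block variance $\mathrm{Var}(q^{-1/2}\sum_{i\in I}X_{ij})$ is perturbed by $o(1)$ under truncation, uniformly. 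Theorem~\ref{pt2} then gives the third piece up to $n^{-c_0}$ with probability $1-o(1)$, and chaining the three estimates completes the proof. I expect the main obstacle to be the bookkeeping rather than any single inequality: one must pick $D_n$ large enough that $\p(\mathcal{A}_n)\to1$ and the variance perturbation is negligible, yet small enough -- in concert with $q,r,l$ -- for the scaling hypotheses of Theorems~\ref{pt1} and~\ref{pt2} to hold, and one must carry the conditional Gaussian anti-concentration for the bootstrap maximum, which only becomes available once Lemma~\ref{pl2} certifies that the bootstrap covariance is non-degenerate with high probability.
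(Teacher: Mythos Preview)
Your proposal is correct and follows essentially the same three-step truncation scheme as the paper: truncate at $D_n=(np)^{1/J}\log n$, control the truncation error for $T_n$ and for $T_n^*$ separately, and apply Theorem~\ref{pt2} to the bounded sequence. The only notable differences are in the execution of the two perturbation steps: for the original statistic the paper controls the tail part $n^{-1/2}\sum_i(\check X_{ij}-\E\check X_{ij})$ via Yoshihara's moment inequality and then invokes Lemma~\ref{pl1}, whereas you restrict to $\mathcal{A}_n$ and use that the difference is a deterministic shift of order $\delta_n$; for the bootstrap the paper compares the two conditional covariance matrices directly and applies the Gaussian comparison Lemma~\ref{pl3}, whereas you bound the conditional Gaussian difference and appeal to anti-concentration. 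Both routes are valid and lead to the same conclusion.
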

\begin{proof}
    We want to apply Theorem \ref{pt1}, to that end we first truncate our data and then recenter it. \\
    
    Define for some $u>1$ to be specified later the quantities
    \begin{align}
        \tilde X_{ij}=X_{ij}\1\{X_{ij}\leq u\}\\
        \check X_{ij}=X_{ij}\1\{X_{ij}>u\}
    \end{align}
    and note that
    \begin{align}
        X_{ij}- \tilde X_{ij}=\check X_{ij}~.
    \end{align}

    We will establish the result in three steps.\\
    \textbf{Step 1: Bootstrap Truncation is negligible}\\
    We want to show that the bootstrap and the truncated bootstrap are asymptotically equivalent, i.e.
    \begin{align}    
       \sup_{t \geq x_0}& \left|\p^*\left( \sup_{1 \leq j \leq p}n^{-1/2}\sum_{i=1}^{n-l+1}Y_{ij} \leq t \right)- \p^*\left( \sup_{1 \leq j \leq p}\frac{1}{\sqrt{nl}}\sum_{i=1}^{n-l+1}\nu_l\sum_{k=0}^{l-1}(\tilde X_{(i+k)j}-\E[\tilde X_{(i+k)j}])\leq t \right)\right|\\ &\lesssim o(1)~.
    \end{align} 
    with high probability.
    To that end we first note that by  the union bound  and Markov's inequality that 
    \begin{align}
        \p\Big(X_{ij}\neq \tilde X_{ij} \text{ for at least one pair $(i,j)$}\Big)\lesssim npu^{-J}
    \end{align}
     Using Lemma \ref{pl3} along with calculations similar to those in the proof of Lemma \ref{pl2} gives that on the set $\{X_{ij}=\tilde X_{ij} \ \  \forall (i,j)\}$ we have
     \begin{align}
         \max_{1 \leq k,h \leq p}\Big|\hat \sigma_{kh}-\tilde \sigma_{kh}\Big|&\lesssim \max_{1 \leq j \leq p} n^{-1}\Big|\sum_{i=1}^n\Big( X_{ij}\E[\tilde X_{ij}]+\E[\tilde X_{ij}]^2\Big)\Big| 
     \end{align}
     Due to $\E[X_{ij}]=0$ we obtain that $|\E[\tilde X_{ij}]|=|\E[\check X_{ij}]|$ so that by Hölder's inequality
     \begin{align}
         |E[\tilde X_{ij}]|\lesssim u^{-J/2}
     \end{align}
    which then yields
    \begin{align}
    \label{p23}
         \max_{1 \leq k,h \leq p}\Big|\hat \sigma_{kh}-\tilde \sigma_{kh}\Big|&\lesssim n^{-1}\max_{1 \leq j \leq p} \Big|\sum_{i=1}^n X_{ij}\E[\tilde X_{ij}]\Big| +u^{-J}     
     \end{align}
    For the first term on the right hand side we note that by arguments similar to those in equation \eqref{p21} that
    \begin{align}
    \label{p22}
        \p\Big(\max_{1 \leq j \leq p}\Big|\sum_{i=1}^n X_{ij}\E[\tilde X_{ij}]\Big|>t\Big)\lesssim pu^{-J^2/2}t^{-J}~.
    \end{align}
    We now choose $u=(np)^{1/J}\log(n)$, letting $t=1$ in \eqref{p22} then yields that on a set of probability $1-o(1)$ we can use \eqref{p23} to obtain
    \begin{align}
         \max_{1 \leq k,h \leq p}\Big|\hat \sigma_{kh}-\tilde \sigma_{kh}\Big| \lesssim n^{-1}~.
    \end{align}
    An application of Lemma \ref{pl3} then finishes step 1.\\

    \textbf{Step 2: The change in $T_n$ when truncating the data is negligible}\\
    Observe that 
    \begin{align}
    \label{p20}
        \p\Big( \sup_{1 \leq j \leq p} n^{-1/2}\sum_{i=1}^nX_{ij} \leq t\Big)\geq \p\Big( \sup_{1 \leq j \leq p}n^{-1/2}\sum_{i=1}^n(\tilde X_{ij}-E[\tilde X_{ij})] \leq t-\sup_{1 \leq j \leq p} n^{-1/2}\sum_{i=1}^n(\check X_{ij}-\E[\check X_{ij}]) \Big)
    \end{align}    
    Consider for an $s$ to be chosen later the inequalities
    \begin{align}
     \label{p21}
         \p\Big( \sup_{1 \leq j \leq p} n^{-1/2}\sum_{i=1}^n(\check X_{ij}-\E[\check X_{ij}]) > s\Big) &\lesssim p\sup_{1 \leq j \leq p}\E\Big[ \Big(n^{-1/2}\sum_{i=1}^n(\check X_{ij}-\E[\check X_{ij}])\Big)^{J/2}\Big]/s^{J/2}\\
         &\lesssim p\sup_{1 \leq j \leq p}\sup_{1 \leq i \leq n}\E\Big[|\check X_{ij}-\E[\check X_{ij}]|^{J/2+\delta/2}\Big]^{J/(J+\delta)}/s^{J/2}\\
         &\lesssim p\sup_{1 \leq j \leq p}\sup_{1 \leq i \leq n}\left(\norm{\check X_{ij}}_{J/2+\delta/2}^{J/2}\right)/s^{J/2}\\
         &\lesssim n^{-J/2}s^{-J/2}
    \end{align}
    where the first inequality follows by the union bound and the Markov inequality, the second by equation 4.1 and the arguments in the proof of Theorem 3 from \cite{Yoshi78} and the third  due to Hölder's inequality. The forth inequality follows by a combination of Markov's and Hölder's inequalities.\\
    Next we  combine \eqref{p20}, \eqref{p21} and Lemma \ref{pl1} to obtain that
    \begin{align}   
        &\Big|\p\Big( \sup_{1 \leq j \leq p}n^{-1/2}\sum_{i=1}^n(\tilde X_{ij}-E[\tilde X_{ij})] \leq t-\sup_{1 \leq j \leq p} n^{-1/2}\sum_{i=1}^n(\check X_{ij}-\E[\check X_{ij}]) \Big)\\
         & \quad \quad -\p\Big( \sup_{1 \leq j \leq p}n^{-1/2}\sum_{i=1}^n(\tilde X_{ij}-E[\tilde X_{ij})] \leq t\Big)\Big| \\
        &        \lesssim \Big(\frac{(np)^{2/J}\log(pn)^{10+2/J}}{n}\Big)^{1/6}+s\sqrt{\log(pn)}+n^{-J/2}s^{-J/2}
    \end{align}
    so that we can use $\eqref{p20}$ together with the choice $s=n^{-1/2}$ to obtain
    \begin{align}
         \p\Big( \sup_{1 \leq j \leq p} n^{-1/2}\sum_{i=1}^nX_{ij} \leq t\Big) \geq \p\Big( \sup_{1 \leq j \leq p}n^{-1/2}\sum_{i=1}^n(\tilde X_{ij}-E[\tilde X_{ij})] \leq t\Big)+ o(1)
    \end{align}
    A similar arguments yields the reverse inequality and we therefore obtain
    \begin{align}
    \label{p30}
         \sup_{t\geq x_0}\Big|\p\Big( \sup_{1 \leq j \leq p} n^{-1/2}\sum_{i=1}^nX_{ij} \leq t\Big) - \p\Big( \sup_{1 \leq j \leq p}n^{-1/2}\sum_{i=1}^n(\tilde X_{ij}-E[\tilde X_{ij})] \leq t\Big)\Big| = o(1)
    \end{align}

    \textbf{Step 3: Applying Theorem \ref{pt2} to the truncated data}\\     

    We now apply Theorem \ref{pt2} to $\tilde X_i-\E[\tilde X_i]$ so that we may assume $D_n=(np)^{1/J}\log(n)$, using (D3) to verify 1) in Theorem \ref{pt2} yields
    \begin{align}
    \label{p31}
        &\sup_{t \geq x_0}\left|\p\Big( \sup_{1 \leq j \leq p}n^{-1/2}\sum_{i=1}^n(\tilde X_{ij}-E[\tilde X_{ij})] \leq t\Big)- \p^*\left( \sup_{1 \leq j \leq p}\frac{1}{\sqrt{nl}}\sum_{i=1}^{n-l+1}\nu_l\sum_{k=0}^{l-1}(\tilde X_{(i+k)j}-\E[\tilde X_{(i+k)j}])\leq t \right)\right|\\
        &=o(1)
    \end{align}

    Combining the three steps finishes the proof.
  
\end{proof}

\bibliographystyle{apalike}
\setlength{\bibsep}{2pt}
\begin{small}
\bibliography{main}

\begin{thebibliography}{}

\bibitem[Andrew~Patton and White, 2009]{Patton2009}
Andrew~Patton, D. N.~P. and White, H. (2009).
\newblock Correction to “automatic block-length selection for the dependent bootstrap” by d. politis and h. white.
\newblock {\em Econometric Reviews}, 28(4):372--375.

\bibitem[Bastian and Dette, 2024]{Bastian2024}
Bastian, P. and Dette, H. (2024).
\newblock Gradual changes in functional time series.
\newblock ArXiv preprint, 2407.07996.

\bibitem[Box, 1976]{Box76}
Box, G. E.~P. (1976).
\newblock Science and statistics.
\newblock {\em Journal of the American Statistical Association}, 71(356):791--799.

\bibitem[Box et~al., 2015]{box2015}
Box, G. E.~P., Jenkins, G.~M., Reinsel, G.~C., and Ljung, G.~M. (2015).
\newblock {\em Time Series Analysis: Forecasting and Control}.
\newblock Wiley, 5th edition.

\bibitem[Box and Pierce, 1970]{Box1970}
Box, G. E.~P. and Pierce, D.~A. (1970).
\newblock Distribution of residual autocorrelations in autoregressive-integrated moving average time series models.
\newblock {\em Journal of the American Statistical Association}, 65(332):1509--1526.

\bibitem[Bradley, 1997]{Bradley1997}
Bradley, R.~C. (1997).
\newblock On quantiles and the central limit question for strongly mixing sequences.
\newblock {\em Journal of Theoretical Probability}, 10(2):507--555.

\bibitem[Bradley, 2005]{Bradley2005}
Bradley, R.~C. (2005).
\newblock {Basic Properties of Strong Mixing Conditions. A Survey and Some Open Questions}.
\newblock {\em Probability Surveys}, 2(none):107 -- 144.

\bibitem[Braumann et~al., 2021]{Braumann2021}
Braumann, A., Kreiss, J.-P., and Meyer, M. (2021).
\newblock Simultaneous inference for autocovariances based on autoregressive sieve bootstrap.
\newblock {\em Journal of Time Series Analysis}, 42(5-6):534--553.

\bibitem[B{\"u}cher et~al., 2021]{buecher21}
B{\"u}cher, A., Dette, H., and Heinrichs, F. (2021).
\newblock Are deviations in a gradually varying mean relevant? a testing approach based on sup-norm estimators.
\newblock {\em The Annals of Statistics}, 49(6):3583 -- 3617.

\bibitem[Bücher et~al., 2023]{Bücher2023}
Bücher, A., Dette, H., and Heinrichs, F. (2023).
\newblock A portmanteau-type test for detecting serial correlation in locally stationary functional time series.
\newblock {\em Statistical Inference for Stochastic Processes}, 26:1--24.

\bibitem[Campbell et~al., 1997]{Campbell97}
Campbell, J.~Y., Lo, A.~W., and MacKinlay, A. (1997).
\newblock {\em The Econometrics of Financial Markets}.
\newblock Princeton University Press.

\bibitem[Chernozhukov et~al., 2015]{Chernuzhokov2013b}
Chernozhukov, V., Chetverikov, D., and Kato, K. (2015).
\newblock Comparison and anti-concentration bounds for maxima of gaussian random vectors.
\newblock {\em Probability Theory and Related Fields}, 162:47--70.

\bibitem[Chernozhukov et~al., 2017]{chernozhukov2017}
Chernozhukov, V., Chetverikov, D., and Kato, K. (2017).
\newblock Detailed proof of nazarov's inequality.
\newblock ArXiv preprint, 1711.10696.

\bibitem[Chernozhukov et~al., 2018]{Chernozhukov2018}
Chernozhukov, V., Chetverikov, D., and Kato, K. (2018).
\newblock Inference on causal and structural parameters using many moment inequalities.
\newblock {\em The Review of Economic Studies}, 86(5):1867--1900.

\bibitem[Chernozhuokov et~al., 2022]{Chernozhukov22}
Chernozhuokov, V., Chetverikov, D., Kato, K., and Koike, Y. (2022).
\newblock Improved central limit theorem and bootstrap approximations in high dimensions.
\newblock {\em The Annals of Statistics}, 50.

\bibitem[Chetverikov et~al., 2020]{Chetverikov2020}
Chetverikov, D., Wilhelm, D., and Kim, D. (2020).
\newblock An adaptive test of stochastic monotonicity.
\newblock {\em Econometric Theory}, 37:1--42.

\bibitem[Cui et~al., 2021]{Cui2021}
Cui, Y., Levine, M., and Zhou, Z. (2021).
\newblock {Estimation and inference of time-varying auto-covariance under complex trend: A difference-based approach}.
\newblock {\em Electronic Journal of Statistics}, 15(2):4264 -- 4294.

\bibitem[Dahlhaus, 2012]{Dahlhaus2013}
Dahlhaus, R. (2012).
\newblock 13 - locally stationary processes.
\newblock In {Subba Rao}, T., {Subba Rao}, S., and Rao, C., editors, {\em Time Series Analysis: Methods and Applications}, volume~30 of {\em Handbook of Statistics}, pages 351--413. Elsevier.

\bibitem[Dette et~al., 2020]{Aue2020}
Dette, H., Kokot, K., and Aue, A. (2020).
\newblock {Functional data analysis in the Banach space of continuous functions}.
\newblock {\em The Annals of Statistics}, 48(2):1168 -- 1192.

\bibitem[Dette et~al., 2015]{Dette2015}
Dette, H., Wu, W., and Zhou, Z. (2015).
\newblock Change point analysis of second order characteristics in non-stationary time series.
\newblock ArXiv preprint, https://arxiv.org/abs/1503.08610.

\bibitem[Ding and Zhou, 2024]{Ding2024}
Ding, X. and Zhou, Z. (2024).
\newblock On the partial autocorrelation function for locally stationary time series: characterization, estimation and inference.
\newblock ArXiv preprint, 2401.15778.

\bibitem[Fama, 1970]{Fama1970}
Fama, E.~F. (1970).
\newblock Efficient capital markets: A review of theory and empirical work.
\newblock {\em The Journal of Finance}, 25(2):383--417.

\bibitem[Francq and Zakoian, 2010]{Francq2010}
Francq, C. and Zakoian, j.-m. (2010).
\newblock {\em GARCH Models: Structure, Statistical Inference and Financial Applications}.
\newblock John Wiley \& Sons, Ltd.

\bibitem[Hannan, 1974]{Hannan1974}
Hannan, E.~J. (1974).
\newblock {The Uniform Convergence of Autocovariances}.
\newblock {\em The Annals of Statistics}, 2(4):803 -- 806.

\bibitem[Hayfield and Racine, 2008]{Hayfield2008}
Hayfield, T. and Racine, J.~S. (2008).
\newblock Nonparametric econometrics: The np package.
\newblock {\em Journal of Statistical Software}, 27(5):1--32.

\bibitem[Jirak, 2011]{Jirak2011}
Jirak, M. (2011).
\newblock On the maximum of covariance estimators.
\newblock {\em J. Multivariate Analysis}, 102:1032--1046.

\bibitem[Killick et~al., 2020]{Killick20}
Killick, R., Knight, M., Nason, G., and Eckley, I. (2020).
\newblock The local partial autocorrelation function and some applications.
\newblock {\em Electronic Journal of Statistics}, 14:3268--3314.

\bibitem[Kim et~al., 2023]{Kim2023}
Kim, M., Kokoszka, P., and Rice, G. (2023).
\newblock {White noise testing for functional time series}.
\newblock {\em Statistics Surveys}, 17(none):119 -- 168.

\bibitem[Li, 2003]{Li2003}
Li, W.~K. (2003).
\newblock {\em Diagnostic Checks in Time Series}.
\newblock Chapman and Hall/CRC, 1st edition.

\bibitem[Ljung and Box, 1978]{Ljung78}
Ljung, G.~M. and Box, G. E.~P. (1978).
\newblock On a measure of lack of fit in time series models.
\newblock {\em Biometrika}, 65(2):297--303.

\bibitem[Ryan and Ulrich, 2024]{Ryan24}
Ryan, J.~A. and Ulrich, J.~M. (2024).
\newblock {\em quantmod: Quantitative Financial Modelling Framework}.
\newblock R package version 0.4.26.

\bibitem[Schucany and Sommers, 1977]{Schucany1977}
Schucany, W.~R. and Sommers, J.~P. (1977).
\newblock Improvement of kernel type density estimators.
\newblock {\em Journal of the American Statistical Association}, 72:420--423.

\bibitem[van Delft and Dette, 2024]{Delft2024}
van Delft, A. and Dette, H. (2024).
\newblock {A general framework to quantify deviations from structural assumptions in the analysis of nonstationary function-valued processes}.
\newblock {\em The Annals of Statistics}, 52(2):550 -- 579.

\bibitem[van~der Vaart and Wellner, 1996]{Wellner1996}
van~der Vaart, A. and Wellner, J.~A. (1996).
\newblock Weak convergence and empirical processes: With applications to statistics.
\newblock Springer Science \& Business Media.

\bibitem[Xiao and Wu, 2014]{Han2014}
Xiao, H. and Wu, W. (2014).
\newblock Portmanteau test and simultaneous inference for serial covariances.
\newblock {\em Statistica Sinica}, 24.

\bibitem[Yoshihara, 1978a]{Yoshi78}
Yoshihara, K.-i. (1978a).
\newblock Moment inequalities for mixing sequences.
\newblock {\em Kodai Math. J.}, 1(1):316--328.

\bibitem[Yoshihara, 1978b]{Yoshihara1978}
Yoshihara, K.-i. (1978b).
\newblock Probability inequalities for sums of absolutely regular processes and their applications.
\newblock {\em Zeitschrift f{\"u}r Wahrscheinlichkeitstheorie und Verwandte Gebiete}, 43(4):319--329.

\bibitem[Yu, 1994]{Yu1994}
Yu, B. (1994).
\newblock Rates of convergence for empirical processes of stationary mixing sequences.
\newblock {\em The Annals of Probability}, 22(1):94--116.

\bibitem[Zhou, 2010]{Zhou2010}
Zhou, Z. (2010).
\newblock {Nonparametric inference of quantile curves for nonstationary time series}.
\newblock {\em The Annals of Statistics}, 38(4):2187 -- 2217.

\bibitem[Zhou, 2013]{Zhou2013}
Zhou, Z. (2013).
\newblock Heteroscedasticity and autocorrelation robust structural change detection.
\newblock {\em Journal of the American Statistical Association}, 108(502):726--740.

\end{thebibliography}
\end{small}

\end{document}